\documentclass[11pt]{article}
\usepackage{amsmath,amssymb,amsthm}
\usepackage{geometry}
\usepackage{stmaryrd}
 \usepackage{longtable}
\usepackage{graphicx}
\usepackage{caption}
\usepackage{booktabs}
\usepackage{hyperref}
\usepackage{url}
\usepackage{placeins}
\usepackage{float}
\newtheorem{proposition}{Proposition}
\newtheorem{corollary}{Corollary}
\newtheorem{theorem}{Theorem}
\newtheorem{lemma}{Lemma}
\newtheorem{remark}{Remark}

\newcommand{\ptop}{p_{\top}}
\newcommand{\pbot}{p_{\bot}}

\begin{document}
\begin{titlepage}

    \centering
    \vspace*{2.5cm}

    {\Large\bfseries Gödel Implication on Finite Chains: \\ Truth Tables and Catalan-Bracketing Enumerations}
    \vspace{0.5cm}
 
    {\Large\scshape Volkan Yildiz\par}
    \vspace{0.3cm}
    {\large\texttt{volkan@hotmail.co.uk}\par}
    \vfill
    {\normalsize \today \par}

\begin{abstract}
Fully bracketed implication terms on $n$ variables are evaluated in G\"odel $m$-valued logic on a finite chain, and we enumerate truth-table rows by output value across all Catalan bracketings. Using the Catalan decomposition, we derive a finite system of generating functions for these value counts and introduce a root-split refinement that records the ordered pair of truth values at the top implication, yielding $m^2$ pair classes. We prove that the associated generating functions share a common dominant square-root singularity, which implies a universal $n^{-3/2}$ asymptotic form with exponential growth rate $(4m)^n$ and a limiting output distribution as $n\to\infty$. The root-split refinement yields matching uniform asymptotics for the pair classes and gives a transparent factorization of the original counts.

\vfill
\noindent\textbf{Keywords:} Gödel many-valued logic, Gödel implication, truth tables, fully bracketed implications,
Catalan bracketings, generating functions, analytic combinatorics, singularity analysis,
asymptotic enumeration, limit laws, limit proportions

\medskip
\noindent\textbf{AMS 2020 Mathematics Subject Classification:}\\
Primary: 03B50, 05A15, 05A16, 05A19.\\
Secondary 05A10.
\end{abstract}

\end{titlepage}

\makeatletter
\newcommand{\oneLinePart}[1]{
  \cleardoublepage
  \refstepcounter{part}
  \thispagestyle{plain}
  \addcontentsline{toc}{part}{\partname~\thepart\quad #1}
  \begin{center}
    \makebox[\textwidth][c]{\Large\bfseries \partname~\thepart\quad #1}
  \end{center}
  \vspace{1.2em}
}
\makeatother

\newpage

\setcounter{tocdepth}{1} 
\tableofcontents
\bigskip

\newpage
\section{Introduction}

The combinatorial study of truth tables of propositional formulae connected by implication has been considered in the literature. In the classical two-valued setting, the
enumeration of propositional functions and the relationship between functions and
formulae are well understood. More recently, our attention has shifted to counting problems for families of formulae whose syntactic structure is restricted, while their semantic behaviour is analysed in detail.

In a series of papers \cite{CameronYildiz2010},  \cite{Yildiz2011b}, \cite{Yildiz2012}, we investigated the truth tables arising from all
fully bracketed implications on distinct variables, namely all bracketings of the word
\[
p_1 \Rightarrow p_2 \Rightarrow \cdots \Rightarrow p_n .
\]
The number of such bracketings is the Catalan number $C_{n-1}$, and the Catalan
decomposition provides a natural recursive framework for analysing the associated truth
tables. In the classical case and in Kleene's three-valued logic 
\cite{Yildiz2020}, this approach
yields explicit recurrences and algebraic generating functions for the numbers of truth
table entries of each output type, together with sharp asymptotics.

The aim of the present paper is to extend this programme to Gödel implication on a finite
chain. In contrast with the Kleene setting, Gödel logic is order--based: truth values form a
totally ordered set, and Gödel implication is determined by this order. This order--theoretic feature introduces a pronounced hierarchical behaviour in the output values and leads to a different combinatorial structure from the previously studied cases.
We begin with the four--valued Gödel chain
\[
V_4=\{0<a<b<1\},
\]
which serves as a toy model illustrating the new phenomena. For each \(n\), we consider all
fully bracketed implications on \(n\) distinct variables, evaluate them under all \(4^n\)
valuations into \(V_4\), and count the total numbers of occurrences of each truth value in the
output columns of the resulting truth tables. The Catalan decomposition again gives
recurrences; however, instead of decoupled scalar relations, the Gödel semantics naturally
lead to a stratified system reflecting the order \(0<a<b<1\). Using an explicit
\(\Phi/\Psi\)--iteration mechanism, we reduce this system to a single governing generating
function, obtain exact closed forms for the limiting proportions of each truth value (in nested
radicals), and provide numerical convergence tables supporting the asymptotic regime.

We extend the method to Gödel $m$--valued implication on the finite chain
$V_m=\{0=v_0<\cdots<v_{m-1}=1\}$.  The same $\Phi/\Psi$ calculus leads to a finite--depth
iteration of length $m-1$ that determines the full $m$--component output distribution.
In particular, we obtain explicit expressions for the limiting proportions $p_k(m)$ and show
that all output generating functions share a common square--root singularity at the dominant
point $r=1/(4m)$, yielding the universal coefficient decay $n^{-3/2}$.

We then analyse the large--$m$ limit.  The mass at the top value converges to
$\ptop(m)\to 1/\sqrt3$, while the bottom value satisfies $\pbot(m)\sim 1/m$.
More generally, fixed truth levels carry vanishing mass as $m\to\infty$; the correct scaling
is instead \emph{macroscopic}: cuts at rank $\lfloor t(m-1)\rfloor$ converge to a nontrivial
limit law with survival function $\Pr(T\ge t)=(1+2t)^{-1/2}$ for $0\le t<1$, together with an
atom of size $1/\sqrt3$ at $t=1$.

We conclude with a further refinement that records the ordered pair of root values
$\bigl(v(\varphi),v(\psi)\bigr)$.  This produces $m^2$ ``pair--count'' sequences (or
$\binom{m+1}{2}$ up to the symmetry $(i,j)\leftrightarrow(j,i)$), whose generating functions
are simple products of the level OGFs; they inherit the same dominant square--root singularity
and hence satisfy the same universal $n^{-3/2}$ transfer asymptotic.

\makeatletter
\@addtoreset{section}{part}
\@addtoreset{subsection}{part}
\makeatother

\renewcommand{\thesection}{\Roman{part}.\arabic{section}}
\renewcommand{\thesubsection}{\thesection.\arabic{subsection}}

\newpage

\oneLinePart{Gödel Four--Valued Implication}

\section{Recurrence Relations}
Let $p_1, p_2, \ldots ,p_n$ be distinct propositional variables.
Let \(V_4=\{0<a<b<1\}\) and consider the formula \(p_1 \Rightarrow_G p_2\).
Gödel implication is defined by
\[
x \Rightarrow_G y =
\begin{cases}
1, & \text{if } x \le y,\\
y, & \text{if } x > y .
\end{cases}
\]

\medskip
The induced operation table is shown in Table~\ref{tab:godel-op}.

\begin{table}[h]
\centering
\[
\begin{array}{c|cccc}
p_1 \Rightarrow_G p_2 & 0 & a & b & 1\\
\hline
0 & 1 & 1 & 1 & 1\\
a & 0 & 1 & 1 & 1\\
b & 0 & a & 1 & 1\\
1 & 0 & a & b & 1
\end{array}
\]
\caption{Gödel implication on \(V_4=\{0<a<b<1\}\).}
\label{tab:godel-op}
\end{table}

The full truth table, listing all valuations of \(p_1\) and \(p_2\), is given in the appendix;
see Table~\ref{tab:godel-n2}. Truth tables for all valuations of \(p_1\), \(p_2\), and \(p_3\)
are likewise provided in the appendix; see Table~\ref{tab:godel4-n3-sidebyside}.
\\\\
In the Gödel four--valued chain \(V_4=\{0<a<b<1\}\) we write
\[
f_n=\#\text{ of zeros},\qquad a_n=\#\text{ of \(a\)'s},\qquad b_n=\#\text{ of \(b\)'s},\qquad
t_n=\#\text{ of ones},
\]
counted across the output columns of the truth tables of all fully bracketed implications
on \(n\) variables. Thus
\[
g_n=f_n+a_n+b_n+t_n,
\qquad g_n=4^n C_{n-1}.
\]

By the Catalan decomposition of bracketings, we have
\[
g_n=\sum_{i=1}^{n-1} g_i g_{n-i}
   =\sum_{i=1}^{n-1}(f_i+a_i+b_i+t_i)(f_{n-i}+a_{n-i}+b_{n-i}+t_{n-i}).
\]

Using the Gödel implication
\[
x\Rightarrow_G y=
\begin{cases}
1,& x\le y,\\
y,& x>y,
\end{cases}
\]
all outputs other than 1 come only from the following product contributions.

\medskip

\noindent\textbf{Output $b$:}
\[
b_n=\sum_{i=1}^{n-1}
\underbrace{t_i\,b_{n-i}}_{\text{$1\Rightarrow b=b$}}.
\]

\noindent\textbf{Output $a$:}
\[
a_n=\sum_{i=1}^{n-1}\left(
\underbrace{b_i\,a_{n-i}}_{\text{$b\Rightarrow a=a$}}
+
\underbrace{t_i\,a_{n-i}}_{\text{$1\Rightarrow a=a$}}
\right)
=\sum_{i=1}^{n-1}\underbrace{(b_i+t_i)\,a_{n-i}}_{\text{output $a$}}.
\]

\noindent\textbf{Output $0$:}
\[
f_n=\sum_{i=1}^{n-1}\left(
\underbrace{a_i\,f_{n-i}}_{\text{$a\Rightarrow 0=0$}}
+
\underbrace{b_i\,f_{n-i}}_{\text{$b\Rightarrow 0=0$}}
+
\underbrace{t_i\,f_{n-i}}_{\text{$1\Rightarrow 0=0$}}
\right)
=\sum_{i=1}^{n-1}\underbrace{(a_i+b_i+t_i)\,f_{n-i}}_{\text{output $0$}}.
\]

\noindent\textbf{Output $1$:}
\[
t_n=g_n-f_n-a_n-b_n.
\]

Equivalently,
\[
t_n=\sum_{i=1}^{n-1}\Big(
g_i g_{n-i}
-
t_i b_{n-i}
-
(b_i+t_i)a_{n-i}
-
(a_i+b_i+t_i)f_{n-i}
\Big).
\]

\medskip

We define the ordinary generating functions
\[
F(x)=\sum_{n\ge1} f_n x^n,\qquad
A(x)=\sum_{n\ge1} a_n x^n,\qquad
B(x)=\sum_{n\ge1} b_n x^n,\qquad
T(x)=\sum_{n\ge1} t_n x^n,
\]
and
\[
G(x)=\sum_{n\ge1} g_n x^n.
\]

Since $g_n=4^n C_{n-1}$, we have
\[
G(x)=\sum_{n\ge 1}4^n C_{n-1}x^n=\frac{1-\sqrt{1-16x}}{2}.
\]

Our OGFs start at $n=1$ while the Catalan recurrences hold for $n\ge 2$, so the $n=1$
terms must be retained. For example,
$b_n=\sum_{i=1}^{n-1} t_i b_{n-i}$ gives $B(x)-x=T(x)B(x)$, i.e.\ $B(x)=x+T(x)B(x)$.
Similarly,
\[
A(x)=x+(B(x)+T(x))A(x),\qquad
F(x)=x+(A(x)+B(x)+T(x))F(x),
\]
and
\[
T(x)=G(x)-F(x)-A(x)-B(x).
\]

\subsection{Component sequences: the four--valued case}
Recall that
\[
g_n=f_n+a_n+b_n+t_n.
\]

The total sequence satisfies
\[
g_n=4^n C_{n-1}\sim \frac{16^n}{4\sqrt{\pi}\,n^{3/2}}.
\]
Numerical evaluation of the recurrences shows that each component grows on the
same exponential scale as \(g_n\), and the ratios \(f_n/g_n\), \(a_n/g_n\), \(b_n/g_n\),
\(t_n/g_n\) rapidly stabilise. For \(n=20\) one obtains approximately
\[
\frac{f_n}{g_n}\approx 0.1498,\qquad
\frac{a_n}{g_n}\approx 0.1006,\qquad
\frac{b_n}{g_n}\approx 0.0729,\qquad
\frac{t_n}{g_n}\approx 0.6767.
\]
This shows us that, in the four valued case, the value \(1\) dominates asymptotically,
while the three lower truth values occur with smaller but non--negligible limiting
frequencies.

\section{The $\Phi$--reduction and the governing generating function}

Consider the Gödel four--valued chain \(V_4=\{0<a<b<1\}\). Let
\[
F(x)=\sum_{n\ge1} f_n x^n,\qquad
A(x)=\sum_{n\ge1} a_n x^n,\qquad
B(x)=\sum_{n\ge1} b_n x^n,\qquad
T(x)=\sum_{n\ge1} t_n x^n
\]
denote the component generating functions corresponding to the output values
\(0,a,b,1\), respectively. Set
\[
G(x):=F(x)+A(x)+B(x)+T(x)
=\sum_{n\ge1} g_n x^n,
\qquad
g_n=4^n C_{n-1},
\]
so that
\begin{equation}\label{eq:G-closed}
G(x)=\sum_{n\ge1}4^nC_{n-1}x^n=\frac{1-\sqrt{1-16x}}{2}.
\end{equation}

\medskip

\noindent
We now re-introduce the functional equations (including the \(n=1\) terms).
The Gödel--\(4\) Catalan decompositions give:
\begin{equation}\label{eq:GF-system-godel4}
B(x)=x+T(x)B(x),\qquad
A(x)=x+(B(x)+T(x))A(x),\qquad
F(x)=x+(A(x)+B(x)+T(x))F(x),
\end{equation}
together with
\begin{equation}\label{eq:T-from-total}
T(x)=G(x)-F(x)-A(x)-B(x).
\end{equation}
\begin{proposition}[Recovering the remaining generating functions]\label{prop:recover-godel4}
Once \(T(x)\) is known, the remaining component generating functions are
\begin{equation}\label{eq:recover-ABF}
B(x)=\frac{x}{1-T(x)},\qquad
A(x)=\frac{x}{1-(B(x)+T(x))},\qquad
F(x)=\frac{x}{1-(A(x)+B(x)+T(x))}.
\end{equation}
\end{proposition}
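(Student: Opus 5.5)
The plan is to solve each linear functional equation in \eqref{eq:GF-system-godel4} for the unknown on its left-hand side, working within the ring of formal power series $\mathbb{Z}[[x]]$. The only issue is that we must divide by expressions of the form $1-H(x)$. So the first step is to check that every $H$ that occurs has zero constant term, which makes $1-H(x)$ invertible in $\mathbb{Z}[[x]]$ (indeed $(1-H)^{-1}=\sum_{k\ge0}H^k$, and this sum converges formally).

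Since $F,A,B,T$ are defined as $\sum_{n\ge1}$, each of them has zero constant term. Hence $T$, $B+T$ and $A+B+T$ all vanish at $x=0$, and $1-T$, $1-(B+T)$ and $1-(A+B+T)$ are all invertible formal power series.

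The derivation then proceeds in the order of the stratification $1\to b\to a\to 0$:

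1. Rewrite $B=x+TB$ as $B(1-T)=x$ and multiply by $(1-T)^{-1}$. This gives $B=x/(1-T)$.
2. Rewrite $A=x+(B+T)A$ as $A\bigl(1-(B+T)\bigr)=x$. Using $B$ already expressed through $T$, this gives $A=x/(1-(B+T))$.
3. Rewrite $F=x+(A+B+T)F$ as $F\bigl(1-(A+B+T)\bigr)=x$, which gives $F=x/(1-(A+B+T))$.

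Since each right-hand side depends only on $T$ and on components already recovered, the triangular structure shows that $T$ determines $B$, $A$ and $F$ uniquely.

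The step needing the most care is justifying \eqref{eq:GF-system-godel4} itself, in particular the $n=1$ terms. For $n=1$ the single formula $p_1$ outputs each value exactly once, so $f_1=a_1=b_1=t_1=1$. Comparing coefficients of $x^n$ for $n\ge2$ reproduces the convolution recurrences, and the coefficient of $x^1$ gives the added $x$. The paper has already derived these equations, so we take them as given.

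If one also wants the identities analytically rather than formally, they hold on $|x|<1/16$ wherever $|T(x)|<1$ etc. This is automatic near $0$, and that is all that is needed for the subsequent singularity analysis.
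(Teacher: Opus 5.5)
Your proof is correct and takes the same approach as the paper, which simply says the formulas are immediate rearrangements of \eqref{eq:GF-system-godel4}. Your added check supplies the detail the paper leaves implicit: $T$, $B+T$ and $A+B+T$ have zero constant term, so $1-T$, $1-(B+T)$ and $1-(A+B+T)$ are invertible in $\mathbb{Z}[[x]]$.
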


\begin{proof}
These are immediate rearrangements of \eqref{eq:GF-system-godel4}.
\end{proof}

\begin{proposition}[$\Phi$--reduction: one equation in one unknown]\label{prop:phi-reduction-godel4}
Define the tail sums
\[
H_3(x):=T(x),\qquad
H_2(x):=B(x)+T(x),\qquad\]
\[
H_1(x):=A(x)+B(x)+T(x),\qquad
H_0(x):=F(x)+A(x)+B(x)+T(x)=G(x),
\]
and define the map
\begin{equation}\label{eq:Phi-def}
\Phi_x(u):=u+\frac{x}{1-u}.
\end{equation}
Then
\[
H_2(x)=\Phi_x(H_3(x)),\qquad
H_1(x)=\Phi_x(H_2(x)),\qquad
H_0(x)=\Phi_x(H_1(x)).
\]
Equivalently,
\begin{equation}\label{eq:G-Phi3-T}
\boxed{\,G(x)=\Phi_x^{\,3}\!\big(T(x)\big).\,}
\end{equation}
\end{proposition}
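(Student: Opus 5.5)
The proof is a direct substitution of Proposition~\ref{prop:recover-godel4} into the definitions of the tail sums, followed by composing the three resulting identities.

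\textbf{Step 1: express the tail sums.} By definition,
\[
H_2=B+T=H_3+B,\qquad H_1=H_2+A,\qquad H_0=H_1+F .
\]
For the last identity we use \eqref{eq:T-from-total}, which gives $F+A+B+T=G$, so $H_0=G$.

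\textbf{Step 2: substitute the recovered components.} Proposition~\ref{prop:recover-godel4} rewrites each newly added component in terms of the preceding tail sum:
\[
B=\frac{x}{1-H_3},\qquad A=\frac{x}{1-H_2},\qquad F=\frac{x}{1-H_1}.
\]
Substituting these into Step 1 gives
\[
H_2=H_3+\frac{x}{1-H_3}=\Phi_x(H_3),\qquad H_1=\Phi_x(H_2),\qquad H_0=\Phi_x(H_1).
\]

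\textbf{Step 3: compose.} Chaining the three identities yields $G=H_0=\Phi_x^{3}(T)$, which is \eqref{eq:G-Phi3-T}.

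\textbf{Main point to justify: the divisions are legitimate.} All objects are formal power series, and every $H_k$ has zero constant term. Hence each $1-H_k$ is invertible in $\mathbb{Z}[[x]]$, so every $\Phi_x(H_k)$ is a well-defined formal power series. This is exactly what licenses the rearrangements in Proposition~\ref{prop:recover-godel4}. Since the identities hold formally, they also hold analytically near $x=0$.
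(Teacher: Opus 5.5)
Your proposal is correct and follows essentially the same route as the paper: substitute $B=x/(1-H_3)$, $A=x/(1-H_2)$, $F=x/(1-H_1)$ into the tail-sum increments to get each $H_k=\Phi_x(H_{k+1})$, then compose the three identities. Your extra remark that each $1-H_k$ is invertible in $\mathbb{Z}[[x]]$, because the $H_k$ have zero constant term, supplies rigor that the paper leaves implicit.
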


\begin{proof}
From \eqref{eq:GF-system-godel4} we obtain
\[
B(x)=\frac{x}{1-T(x)}.
\]
Hence
\[
H_2(x)=B(x)+T(x)=T(x)+\frac{x}{1-T(x)}=\Phi_x(H_3(x)).
\]
and, from \eqref{eq:GF-system-godel4},
\[
A(x)=\frac{x}{1-(B(x)+T(x))}=\frac{x}{1-H_2(x)},
\]
so
\[
H_1(x)=A(x)+H_2(x)=H_2(x)+\frac{x}{1-H_2(x)}=\Phi_x(H_2(x)).
\]
Finally,
\[
F(x)=\frac{x}{1-(A(x)+B(x)+T(x))}=\frac{x}{1-H_1(x)},
\]
so
\[
H_0(x)=F(x)+H_1(x)=H_1(x)+\frac{x}{1-H_1(x)}=\Phi_x(H_1(x)).
\]
Iterating gives \(H_0=\Phi_x^{3}(H_3)\), i.e. \eqref{eq:G-Phi3-T}.
\end{proof}

\medskip 

\begin{proposition}[Explicit inverse and nested radicals]\label{prop:Psi-inverse-godel4}
The map \(\Phi_x\) in \eqref{eq:Phi-def} is invertible on formal power series with zero
constant term. Its inverse branch is
\begin{equation}\label{eq:Psi-def}
\Psi_x(v):=\frac{1+v-\sqrt{(1-v)^2+4x}}{2},
\end{equation}
characterised by \(\Psi_x(0)=0\) and \(\Phi_x(\Psi_x(v))=v\).
Thus,
\begin{equation}\label{eq:T-Psi3-G}
\boxed{\,T(x)=\Psi_x^{\,3}\!\big(G(x)\big),\qquad
G(x)=\frac{1-\sqrt{1-16x}}{2}.\,}
\end{equation}
\end{proposition}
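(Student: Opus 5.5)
The plan is to solve the equation $\Phi_x(u)=v$ directly as a quadratic in $u$, pick the right root using the constant-term condition, and then invert the identity $G=\Phi_x^{3}(T)$ of Proposition~\ref{prop:phi-reduction-godel4}.

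\textbf{Solving the quadratic.} For $u\ne 1$, the equation $u+\frac{x}{1-u}=v$ is equivalent to
\[
(u-v)(1-u)+x=0,
\qquad\text{that is,}\qquad
u^2-(1+v)u+(v-x)=0 .
\]
Its discriminant is
\[
(1+v)^2-4(v-x)=(1-v)^2+4x,
\]
so the two candidate roots are
\[
u_\pm=\frac{1+v\pm\sqrt{(1-v)^2+4x}}{2}.
\]

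\textbf{Choosing the branch.} Work in formal power series in $x$, with $v$ either a second indeterminate or a series having zero constant term. Then $(1-v)^2+4x$ has constant term $1$, so it has a unique square root with constant term $1$. With that square root:
\begin{itemize}
\item $u_+$ has constant term $1$, so it is not an admissible series (and $1-u_+$ is not invertible);
\item $u_-=\Psi_x(v)$ has constant term $0$, giving $\Psi_x(0)=0$.
\end{itemize}
Since $\Psi_x(v)$ has zero constant term, $1-\Psi_x(v)$ is invertible. The factored form of the equation then gives $\Phi_x(\Psi_x(v))=v$ as a formal identity.

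\textbf{Injectivity of $\Phi_x$.} To show $\Psi_x$ is the inverse on zero-constant-term series, I need $\Phi_x$ to be injective there. Suppose $\Phi_x(u_1)=\Phi_x(u_2)$. Subtracting,
\[
(u_1-u_2)\Bigl(1-\frac{x}{(1-u_1)(1-u_2)}\Bigr)=0 .
\]
The second factor has constant term $1$, so it is invertible, and hence $u_1=u_2$. Therefore $\Phi_x$ is a bijection from zero-constant-term series onto its image, with $\Psi_x$ as the inverse. A Lagrange-type or coefficient-by-coefficient argument in $x$ would also do, but the factorisation above is the quickest route.

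\textbf{Applying the inverse.} Set $H_3=T$, $H_2$, $H_1$, $H_0=G$. Each $H_k$ has zero constant term, since all the component series start at $n=1$. Proposition~\ref{prop:phi-reduction-godel4} gives $H_{k-1}=\Phi_x(H_k)$, so applying $\Psi_x$ yields $H_k=\Psi_x(H_{k-1})$. Iterating three times gives
\[
T(x)=\Psi_x^{3}\bigl(G(x)\bigr),
\]
with $G(x)$ as in \eqref{eq:G-closed}. Each intermediate argument again has zero constant term, so every composition is well defined.

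\textbf{Main obstacle.} The only delicate point is the branch of the square root. It must be taken formally, with constant term $1$, rather than as an analytic principal branch. This has to be stated carefully when $v$ is a series rather than a number.
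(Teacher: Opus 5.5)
Your proof is correct and follows the paper's route: you solve $u^2-(1+v)u+(v-x)=0$, take the minus branch as the unique root with zero constant term, and apply $\Psi_x$ three times to $G=\Phi_x^{3}(T)$; your injectivity step and the formal choice of square root make explicit what the paper leaves implicit. One small slip: the factorisation is $\Phi_x(u_1)-\Phi_x(u_2)=(u_1-u_2)\bigl(1+\frac{x}{(1-u_1)(1-u_2)}\bigr)$, with a plus sign rather than a minus, but the second factor still has constant term $1$, so your conclusion stands.
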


\begin{proof}
If \(v=\Phi_x(u)=u+\frac{x}{1-u}\), then \(v(1-u)=u(1-u)+x\), i.e.
\(u^2-(1+v)u+(v-x)=0\). Solving this quadratic gives
\[
u=\frac{1+v\pm\sqrt{(1-v)^2+4x}}{2}.
\]
The branch with \(u(0)=0\) is obtained by choosing the minus sign, giving \eqref{eq:Psi-def}.
Applying \(\Psi_x\) three times to \eqref{eq:G-Phi3-T} gives \eqref{eq:T-Psi3-G}.
\end{proof}

\medskip


\section{Asymptotics}

Let \(r=1/16\) be the dominant singularity of \(G(x)\). From \eqref{eq:G-closed} we have the
local expansion
\begin{equation}\label{eq:G-local}
G(x)=\frac12-\frac12\sqrt{1-\frac{x}{r}}+O\!\left(1-\frac{x}{r}\right)\qquad (x\to r^-).
\end{equation}

\newpage

\begin{theorem}[Limiting proportion of $1$'s and asymptotics]\label{thm:limit-ones-godel4}
Let \(\ptop:=\lim_{n\to\infty} \frac{t_n}{g_n}\), where \(t_n=[x^n]T(x)\) and
\(g_n=[x^n]G(x)=4^nC_{n-1}\). Then \(p\) exists and
\[
t_n \sim \frac{p}{4}\cdot \frac{16^n}{\sqrt{\pi}\,n^{3/2}}.
\]
\end{theorem}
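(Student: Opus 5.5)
We would work directly from the closed form $T(x)=\Psi_x^{3}(G(x))$ of Proposition~\ref{prop:Psi-inverse-godel4}, and show that $T$ inherits the square-root singularity of $G$ at $r=1/16$ with no other singularity on $|x|\le r$. Set $h_0=G$, $h_{k+1}=\Psi_x(h_k)$, so that $h_k=H_{3-k}$ and $T=h_3$. Each step is
\[
h_{k+1}(x)=\frac{1+h_k(x)-\sqrt{(1-h_k(x))^2+4x}}{2}.
\]
Since the coefficients of $T,B,A,F$ are nonnegative and bounded by $g_n$, every $H_j$ has radius of convergence at least $1/16$.

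\textbf{Regularity of the composition.} At $x=r$ we have $G(r)=1/2$, and we compute the values $h_k(r)$ successively. We need to check that $\Delta_k(x):=(1-h_k(x))^2+4x$ does not vanish on the closed disc $|x|\le r$. At $x=r$ this holds because $\Delta_k(r)\ge 4r>0$ for real $h_k(r)<1$. For $|x|\le r$ with $x\ne r$, we would use positivity of coefficients: $|h_k(x)|\le h_k(|x|)<h_k(r)<1$. This keeps $1-h_k$ away from $0$. It also keeps $\Delta_k$ in a half-plane region that avoids $0$, though this needs a short estimate, perhaps by bounding $|1-h_k|^2$ against $4|x|$.

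Granting this, $\Psi_x(v)$ is analytic in $(x,v)$ near every point $(x,h_k(x))$ with $|x|\le r$. The only source of singularity is then $G$ itself, at $x=r$, and it is unique on the circle $|x|=r$ because $1-16x$ vanishes only there. Hence $T$ is $\Delta$-analytic at $r$, being analytic in a $\Delta$-domain as a composition of analytic maps with $G$.

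\textbf{Local expansion.} Write $G=\tfrac12-\tfrac12 Z+O(Z^2)$ with $Z=\sqrt{1-x/r}$. Because $\Psi$ is analytic at $(r,h_k(r))$, the chain rule gives
\[
h_{k+1}=h_{k+1}(r)-\tfrac12\lambda_{k+1}Z+O(Z^2),
\qquad
\lambda_{k+1}=\lambda_k\,\partial_v\Psi_r(h_k(r)),\quad \lambda_0=1.
\]
Here $\partial_v\Psi_x(v)=\tfrac12\bigl(1+(1-v)/\sqrt{(1-v)^2+4x}\bigr)$, which lies in $(0,1)$ and in particular is nonzero. Set $p:=\lambda_3=\prod_{k=0}^{2}\partial_v\Psi_r(h_k(r))$. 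Then $T(x)=T(r)-\tfrac{p}{2}\sqrt{1-x/r}+O(1-x/r)$.

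\textbf{Transfer.} The transfer theorem gives $[x^n]\sqrt{1-x/r}\sim -\,16^n/(2\sqrt\pi\, n^{3/2})$. Therefore
\[
t_n\sim \frac{p}{4}\cdot\frac{16^n}{\sqrt\pi\, n^{3/2}},
\]
and the same argument applied to $G$ gives $g_n\sim \tfrac14\cdot 16^n/(\sqrt\pi\,n^{3/2})$. Dividing, $t_n/g_n\to p$, so the limit $\ptop$ exists and equals $p$. The nested radical values of $h_k(r)$ then make $p$ explicit.

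The main obstacle is the first step: showing that no radicand $\Delta_k$ vanishes and that $h_k\ne1$ on $|x|=r$ apart from $x=r$, so that no additional singularities arise on the circle of convergence. The first tool to try there is the positivity and aperiodicity of the coefficients, using the strict triangle inequality $|h_k(x)|<h_k(r)$ for $|x|=r$, $x\ne r$. This strict inequality holds because $h_k$ has nonzero coefficients in consecutive degrees $n=1,2$.
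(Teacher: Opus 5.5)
Your proposal is correct and follows the paper's own route: it propagates the square-root singularity of $G$ through $T=\Psi_x^{3}(G)$ by the chain rule, obtaining the factor $\prod_{k=0}^{2}\partial_v\Psi_r(h_k(r))$, then applies the transfer theorem and divides by $g_n$. You are more careful than the paper, which never checks $\Delta$-analyticity, and the estimate you left open closes immediately: on $|x|\le r$ one has $|1-h_k(x)|\ge 1-h_k(r)\ge\tfrac12\ge 2\sqrt{|x|}$, with equality only for $k=0$, $x=r$, where $\Delta_0(r)=\tfrac12\neq0$, so no $\Delta_k$ vanishes on the closed disc (note also the indexing slip: your $h_k$ equals $H_k$, not $H_{3-k}$).
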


\begin{proof}
By \eqref{eq:G-local} and the transfer theorem for square--root singularities
\cite[Ch.~VI]{FlajoletSedgewick2009},
\[
g_n=[x^n]G(x)\sim \frac{16^n}{4\sqrt{\pi}\,n^{3/2}}.
\]
Since \(T(x)=\Psi_x^{\,3}(G(x))\) by \eqref{eq:T-Psi3-G} and \(\Psi_x(v)\) is analytic in \(v\)
near the values encountered for \(x\) in a neighborhood of \(r\), the singular expansion
\eqref{eq:G-local} propagates through composition and yields
\[
T(x)=T(r)-\lambda \sqrt{1-\frac{x}{r}}+O\!\left(1-\frac{x}{r}\right)\qquad (x\to r^-)
\]
for some \(\lambda>0\). Applying the same transfer theorem gives
\[
t_n=[x^n]T(x)\sim \frac{\lambda}{\sqrt{\pi}}\frac{16^n}{n^{3/2}}.
\]
Taking the ratio shows \(\frac{t_n}{g_n}\to 4\lambda=:p\), and the stated form of \(t_n\)
follows.
\end{proof}

\subsection{Gödel--$4$: exact limiting proportions and asymptotic constants}

Recall
\[
G(x)=F(x)+A(x)+B(x)+T(x)=\frac{1-\sqrt{1-16x}}{2}.
\]
Set \(r=\frac1{16}\) and recall the inverse map
\[
\Psi_x(v)=\frac{1+v-\sqrt{(1-v)^2+4x}}{2},
\]
so that \(T(x)=\Psi_x^{\,3}(G(x))\) (Proposition~\ref{prop:Psi-inverse-godel4}).  At the
dominant point \(x=r\) we have \(4r=\frac14\) and thus
\begin{equation}\label{eq:Psi-at-r}
\Psi_r(v)=\frac{1+v-\sqrt{(1-v)^2+\frac14}}{2}.
\end{equation}

\medskip
\noindent\textbf{Auxiliary radicals.}
Define
\[
\beta:=\sqrt{7+2\sqrt2},\qquad
\alpha:=1+\sqrt2+\beta,\qquad
\gamma:=\sqrt{16+\alpha^2}.
\]

\begin{lemma}[Explicit evaluation of the iterates at \(x=r\)]\label{lem:iterates-at-r}
Let \(w_0:=G(r)=\frac12\) and \(w_{k+1}:=\Psi_r(w_k)\) for \(k=0,1,2\). Then
\[
w_0=\frac12,\qquad
w_1=\frac34-\frac{\sqrt2}{4},\qquad
w_2=\frac78-\frac{\sqrt2}{8}-\frac{\beta}{8},\qquad
w_3=\frac{15}{16}-\frac{\sqrt2}{16}-\frac{\beta}{16}-\frac{\gamma}{16},
\]
where \(w_3=T(r)\).
\end{lemma}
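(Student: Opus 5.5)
The plan is a direct computation: iterate the explicit branch $\Psi_r$ from \eqref{eq:Psi-at-r} three times, starting at $w_0=G(r)=\tfrac12$. Then identify $w_3$ with $T(r)$ using the identity $T(x)=\Psi_x^{\,3}(G(x))$ of Proposition~\ref{prop:Psi-inverse-godel4}. The key simplification at every step is to write $1-w_k$ as a single fraction. Its square plus $\tfrac14$ then becomes a perfect-square denominator times one of the radicands $2$, $7+2\sqrt2$ or $16+\alpha^2$, which is exactly where $\sqrt2$, $\beta$ and $\gamma$ come from.

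The steps, in order:

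\emph{Step 0 to 1.} From \eqref{eq:G-closed}, $G(r)=\tfrac12$. Then $(1-w_0)^2+\tfrac14=\tfrac12$, whose square root is $\tfrac{\sqrt2}{2}$. This gives $w_1=\tfrac12\bigl(\tfrac32-\tfrac{\sqrt2}{2}\bigr)=\tfrac34-\tfrac{\sqrt2}{4}$.

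\emph{Step 1 to 2.} Here $1-w_1=\tfrac{1+\sqrt2}{4}$, so
\[
(1-w_1)^2+\tfrac14=\frac{3+2\sqrt2+4}{16}=\frac{\beta^2}{16},
\]
with square root $\beta/4$. Hence $w_2=\tfrac12\bigl(1+w_1-\tfrac{\beta}{4}\bigr)=\tfrac78-\tfrac{\sqrt2}{8}-\tfrac{\beta}{8}$.

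\emph{Step 2 to 3.} Here $1-w_2=\tfrac{1+\sqrt2+\beta}{8}=\tfrac{\alpha}{8}$, so
\[
(1-w_2)^2+\tfrac14=\frac{\alpha^2+16}{64}=\frac{\gamma^2}{64},
\]
with square root $\gamma/8$. Averaging $1+w_2-\gamma/8$ then yields the stated $w_3$.

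In each step I take the positive square root, which is consistent with the branch choice $\Psi_x(0)=0$ in Proposition~\ref{prop:Psi-inverse-godel4}, and I confirm that the radicands are positive.

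The step I expect to need genuine care is justifying $w_3=T(r)$, that is, evaluating the formal identity $T(x)=\Psi_x^{\,3}(G(x))$ at the boundary point $x=r$. My argument is as follows. The series $T$ has nonnegative coefficients bounded by those of $G$, and $G$ converges at $r$, so $T$ converges on $[0,r]$ and $T(x)\to T(r)$ as $x\to r^-$ by Abel's theorem. On $[0,r]$ every intermediate iterate satisfies $H_k(x)\le G(x)\le\tfrac12<1$, and the radicand $(1-v)^2+4x$ stays positive. Hence the composite $\Psi_x^{\,3}(G(x))$ is continuous in $x$ on $[0,r]$ and agrees with $T(x)$ for small $x$. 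Both sides are real-analytic on $[0,r)$ and continuous up to $r$, so they agree on all of $[0,r)$, and letting $x\to r^-$ gives $T(r)=\Psi_r^{\,3}(1/2)=w_3$. Analogously, $w_1=H_2(r)$ and $w_2=H_1(r)$.
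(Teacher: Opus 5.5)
Your computation is correct and is the same three-step evaluation of $\Psi_r$ that the paper carries out. Your Abel's-theorem and identity-theorem argument for $w_3=T(r)$ supplies a justification the paper leaves implicit. One point there is slightly circular: bounding the composite's intermediate values by $H_k\le G$ presupposes the identification you are proving. Use $\Psi_x(v)\le v$ for $x\ge0$ and $v\le1$ instead, or argue one $\Psi$ at a time.

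There is also a slip in your last line. Since $H_1=\Psi_x(H_0)$ and $H_2=\Psi_x(H_1)$, one has $w_1=H_1(r)$ and $w_2=H_2(r)$, not the reverse. This is consistent with $w_1\approx 0.396>w_2\approx 0.306$, because $H_1=A+B+T\ge H_2=B+T$.
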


\begin{proof}
We compute successively using \eqref{eq:Psi-at-r}.

\smallskip\noindent
\emph{Step 1.} Since \(w_0=\frac12\),
\[
\sqrt{(1-w_0)^2+\frac14}=\sqrt{\left(\frac12\right)^2+\frac14}=\sqrt{\frac12}=\frac{\sqrt2}{2},
\]
hence
\[
w_1=\Psi_r\!\left(\frac12\right)
=\frac{1+\frac12-\frac{\sqrt2}{2}}{2}
=\frac34-\frac{\sqrt2}{4}.
\]

\smallskip\noindent
\emph{Step 2.} Here \(1-w_1=\frac{1+\sqrt2}{4}\), so
\[
(1-w_1)^2+\frac14=\frac{(1+\sqrt2)^2}{16}+\frac14=\frac{7+2\sqrt2}{16},
\qquad
\sqrt{(1-w_1)^2+\frac14}=\frac{\beta}{4}.
\]
Substituting into \eqref{eq:Psi-at-r} yields
\[
w_2=\Psi_r(w_1)=\frac{1+w_1-\frac{\beta}{4}}{2}
=\frac78-\frac{\sqrt2}{8}-\frac{\beta}{8}.
\]

\smallskip\noindent
\emph{Step 3.} Next \(1-w_2=\frac{1+\sqrt2+\beta}{8}=\frac{\alpha}{8}\), hence
\[
(1-w_2)^2+\frac14=\frac{\alpha^2}{64}+\frac14=\frac{\alpha^2+16}{64},
\qquad
\sqrt{(1-w_2)^2+\frac14}=\frac{\gamma}{8}.
\]
Substituting into \eqref{eq:Psi-at-r} gives
\[
w_3=\Psi_r(w_2)
=\frac{1+w_2-\frac{\gamma}{8}}{2}
=\frac{15}{16}-\frac{\sqrt2}{16}-\frac{\beta}{16}-\frac{\gamma}{16}.
\]
\end{proof}

\begin{proposition}[Derivative product and the constant \(\ptop \)]\label{prop:p1-product}
Differentiating \(\Psi_x(v)\) gives
\begin{equation}\label{eq:dPsi}
\partial_v\Psi_x(v)
=\frac12\left(1+\frac{1-v}{\sqrt{(1-v)^2+4x}}\right).
\end{equation}
Evaluating at \(x=r\) along \(w_0,w_1,w_2\) yields
\[
\partial_v\Psi_r(w_0)=\frac{2+\sqrt2}{4},\qquad
\partial_v\Psi_r(w_1)=\frac{\alpha}{2\beta},\qquad
\partial_v\Psi_r(w_2)=\frac{\alpha+\gamma}{2\gamma}.
\]
Consequently,
\begin{equation}\label{eq:p1-product}
\ptop :=\lim_{n\to\infty}\frac{t_n}{g_n}
=\prod_{k=0}^{2}\partial_v\Psi_r(w_k)
=\frac{(2+\sqrt2)\,\alpha(\alpha+\gamma)}{16\,\beta\,\gamma}.
\end{equation}
\end{proposition}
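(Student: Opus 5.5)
The proof has two parts: the derivative formula with its three evaluations along the orbit, and the identification of $\ptop$ as the product of those derivatives.

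\textbf{Derivative and evaluations.} Differentiating \eqref{eq:Psi-def} in $v$ gives $\partial_v\Psi_x(v)=\frac12\bigl(1+\frac{1-v}{\sqrt{(1-v)^2+4x}}\bigr)$, since $\partial_v\sqrt{(1-v)^2+4x}=-(1-v)/\sqrt{(1-v)^2+4x}$. At $x=r$ the values $1-w_k$ and the square roots $\sqrt{(1-w_k)^2+\frac14}$ were already found in the proof of Lemma~\ref{lem:iterates-at-r}.
\begin{itemize}
\item For $k=0$: $1-w_0=\frac12$ and the root is $\frac{\sqrt2}{2}$, so the ratio is $1/\sqrt2$. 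This gives $\frac12(1+\frac{\sqrt2}{2})=\frac{2+\sqrt2}{4}$.
\item For $k=1$: $1-w_1=\frac{1+\sqrt2}{4}$ and the root is $\frac{\beta}{4}$. This gives $\frac12\cdot\frac{\beta+1+\sqrt2}{\beta}=\frac{\alpha}{2\beta}$.
\item For $k=2$: $1-w_2=\frac{\alpha}{8}$ and the root is $\frac{\gamma}{8}$. This gives $\frac{\alpha+\gamma}{2\gamma}$.
\end{itemize}
Multiplying the three values yields $\frac{(2+\sqrt2)\alpha(\alpha+\gamma)}{16\beta\gamma}$.

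\textbf{Why $\ptop$ is this product.} Write $T=\Psi_x^{3}(G)$ and compose the singular expansions. We need that $\Psi_x(v)$ is jointly analytic in $(x,v)$ near each point $(r,w_k)$. This holds because $(1-w_k)^2+4r>0$, so the square root stays away from its branch point.

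Put $\varepsilon=\sqrt{1-x/r}$ and $G=\frac12-\frac12\varepsilon+O(\varepsilon^2)$. Suppose $w^{(k)}(x):=\Psi_x^{k}(G(x))=w_k-c_k\varepsilon+O(\varepsilon^2)$. Taylor expanding $\Psi_x(v)$ at $(r,w_k)$ gives
\[
\Psi_x(w^{(k)})=w_{k+1}+\partial_v\Psi_r(w_k)\,(w^{(k)}-w_k)+\partial_x\Psi\,(x-r)+O(|w^{(k)}-w_k|^2).
\]
The $x$-variation contributes only $O(\varepsilon^2)$, because $x-r=-r\varepsilon^2$. Hence $c_{k+1}=\partial_v\Psi_r(w_k)\,c_k$ with $c_0=\frac12$, and so $T(x)=T(r)-\lambda\varepsilon+O(\varepsilon^2)$ with $\lambda=\frac12\prod_{k=0}^2\partial_v\Psi_r(w_k)$.

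\textbf{Transfer step.} We also need $\Delta$-analyticity of $T$ at $r$, that is, no other singularities on $|x|=r$. This follows as in Theorem~\ref{thm:limit-ones-godel4}: $G$ is $\Delta$-analytic, and $\Psi$ stays analytic along the composition in a $\Delta$-domain by continuity, since the discriminants are nonzero near $x=r$ and the coefficients are nonnegative. Transfer then gives $t_n\sim\frac{\lambda}{\sqrt\pi}\frac{16^n}{n^{3/2}}$, while $g_n\sim\frac{1/2}{\sqrt\pi}\frac{16^n}{n^{3/2}}$. The ratio is $\lambda/(1/2)=\prod_k\partial_v\Psi_r(w_k)$.

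This resolves the apparent mismatch with the normalization "$p=4\lambda$" in Theorem~\ref{thm:limit-ones-godel4}: the singular coefficient of $G$ is $\frac12$, and the computation above shows the correct factor is $2\lambda$.

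The main obstacle is rigor in the composition step: justifying that the $\varepsilon$-linear coefficients multiply by $\partial_v\Psi_r(w_k)$, which needs the uniform analyticity above, and checking that no cancellation makes $\lambda=0$. Cancellation cannot occur because every factor is positive.
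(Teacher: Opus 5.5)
Your proposal is correct and follows the paper's argument: the derivative and the three evaluations are exactly the paper's ``direct differentiation and substitution using Lemma~\ref{lem:iterates-at-r}'', and your identification of $\ptop$ as the product via $c_{k+1}=\partial_v\Psi_r(w_k)\,c_k$ with $c_0=\tfrac12$ is the singular-coefficient propagation of Proposition~\ref{prop:p1m-product}, which gives $\ptop=2\kappa$ and so confirms your correction of the factor $4\lambda$ in Theorem~\ref{thm:limit-ones-godel4}. One slip: the transfer theorem gives $[x^n]\bigl(-c\sqrt{1-x/r}\bigr)\sim\frac{c}{2\sqrt{\pi}}\,r^{-n}n^{-3/2}$, hence $t_n\sim\frac{\lambda}{2\sqrt{\pi}}\frac{16^n}{n^{3/2}}$ and $g_n\sim\frac{16^n}{4\sqrt{\pi}\,n^{3/2}}$ rather than the constants you wrote, but both of yours are off by the same factor $2$, so the ratio $2\lambda$ is unaffected.
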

\begin{proof}
A direct differentiation and substitution using Lemma~\ref{lem:iterates-at-r}.
\end{proof}

\begin{theorem}[Exact limiting proportions for Gödel--$4$]\label{thm:godel4-exact-proportions}
The limits
\[
\pbot  :=\lim_{n\to\infty}\frac{f_n}{g_n},\qquad
p_a:=\lim_{n\to\infty}\frac{a_n}{g_n},\qquad
p_b:=\lim_{n\to\infty}\frac{b_n}{g_n},\qquad
\ptop :=\lim_{n\to\infty}\frac{t_n}{g_n}
\]
exist and satisfy \(\pbot +p_a+p_b+\ptop =1\). Moreover, they are given exactly by
\[
\pbot=\frac12-\frac{\sqrt2}{4},
\qquad
p_a=\frac{2+\sqrt2}{2\alpha\beta},
\qquad
p_b=\frac{(2+\sqrt2)\alpha}{\beta\,\gamma(\alpha+\gamma)},
\qquad
\ptop =\frac{(2+\sqrt2)\alpha(\alpha+\gamma)}{16\,\beta\,\gamma}.
\]
In particular,
\[
\ptop \approx 0.684122210733017786\ldots\, .
\]
\end{theorem}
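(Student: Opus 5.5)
The plan is to express each component as a difference of consecutive tail sums and transfer square-root singularities through the $\Psi$-iteration. By Proposition~\ref{prop:phi-reduction-godel4} we have
\[
T=H_3,\qquad B=H_2-H_3,\qquad A=H_1-H_2,\qquad F=H_0-H_1,
\]
with $H_3=\Psi_x^3(G)$, $H_2=\Psi_x^2(G)$, $H_1=\Psi_x(G)$ and $H_0=G$.

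\textbf{Step 1: singular expansions of the tail sums.} Set $w_k=H_{k}$-iterates at $x=r$, as in Lemma~\ref{lem:iterates-at-r}, so that $H_{3-k}$ plays the role of $w_k$ (for instance $H_1(r)=w_1$). Each map $\Psi_x(v)$ is jointly analytic in $(x,v)$ near $(r,w_k)$, because $(1-w_k)^2+4r>0$. Composing with the expansion \eqref{eq:G-local} therefore gives
\[
H_{3-k}(x)=w_k-\tfrac12\Big(\prod_{j<k}\partial_v\Psi_r(w_j)\Big)\sqrt{1-x/r}+O(1-x/r).
\]
The $x$-dependence of $\Psi_x$ contributes only analytic terms, of order $O(1-x/r)$, and so does not affect the square-root coefficient. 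For the transfer theorem we also need $\Delta$-analyticity. I will take it from the fact that $G$ is $\Delta$-analytic, together with the nonvanishing of the radicand of $\Psi_x$ along the iterates in a slit neighbourhood. This is the step I expect to need the most care: one has to ensure there are no other singularities on $|x|=r$. Positivity of the coefficients and $|H_k(x)|\le H_k(|x|)$ should handle this, since the radicand $(1-v)^2+4x$ stays away from zero when $|v|\le w_0<1$. Writing $D_k=\prod_{j<k}\partial_v\Psi_r(w_j)$, with $D_0=1$, the transfer theorem gives
\[
[x^n]H_{3-k}\sim D_k\,g_n .
\]
Hence each limit exists: $\ptop=D_3$, $p_b=D_2-D_3$, $p_a=D_1-D_2$ and $\pbot=1-D_1$. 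The sum telescopes to $D_0=1$, which proves $\pbot+p_a+p_b+\ptop=1$.

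\textbf{Step 2: evaluate the derivative factors.} Use the derivative values from Proposition~\ref{prop:p1-product}:
\[
D_1=\tfrac{2+\sqrt2}{4},\qquad D_2=D_1\cdot\tfrac{\alpha}{2\beta},\qquad D_3=D_2\cdot\tfrac{\alpha+\gamma}{2\gamma}.
\]
Then $\pbot=1-D_1=\tfrac12-\tfrac{\sqrt2}{4}$ directly.

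For $p_a$, compute
\[
p_a=D_1\Big(1-\tfrac{\alpha}{2\beta}\Big)=\tfrac{(2+\sqrt2)(2\beta-\alpha)}{8\beta}.
\]
Since $2\beta-\alpha=\beta-1-\sqrt2$, the claim reduces to $(\beta-1-\sqrt2)\alpha=4$. This holds because $\alpha=\beta+1+\sqrt2$ and $\beta^2-(1+\sqrt2)^2=7+2\sqrt2-3-2\sqrt2=4$.

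For $p_b$, compute
\[
p_b=D_2\Big(1-\tfrac{\alpha+\gamma}{2\gamma}\Big)=D_2\,\tfrac{\gamma-\alpha}{2\gamma}.
\]
Using $\gamma^2-\alpha^2=16$, we get $\gamma-\alpha=16/(\alpha+\gamma)$. This gives
\[
p_b=\tfrac{(2+\sqrt2)\alpha}{8\beta}\cdot\tfrac{8}{\gamma(\alpha+\gamma)},
\]
which is the stated form. Finally, $\ptop=D_3$ is exactly \eqref{eq:p1-product}.

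\textbf{Step 3: the numerical value.} Evaluate the radicals numerically:
\[
\beta\approx3.0858,\qquad \alpha\approx6.5000,\qquad \gamma\approx\sqrt{58.25}.
\]
This confirms $\ptop\approx0.6841$, consistent with the $n=20$ data.
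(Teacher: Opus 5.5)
Your argument is correct in substance, but it runs in the opposite direction from the paper's proof. You push the explicit expansion of $G$ down the $\Psi$-chain, get $[x^n]H_k\sim D_k\,g_n$ for every tail sum, and read off each constant as a difference $D_k-D_{k+1}$. This makes the existence of all four limits and $\pbot+p_a+p_b+\ptop=1$ automatic by telescoping. The cost is that you must convert differences into the stated products via $\beta^2-(1+\sqrt2)^2=4$ and $\gamma^2-\alpha^2=16$.

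The paper instead takes the square-root expansion of $T$ as already known and pushes it \emph{up} through the recovery relations $B=x/(1-T)$, $A=x/(1-(B+T))$, $F=x/(1-(A+B+T))$. Since $\partial_u\bigl(r/(1-u)\bigr)=r/(1-u)^2$, each constant equals $r/(1-w)^2$, evaluated at the tail value just above it, times the limiting mass of the levels above it. For example, $p_b=\frac{16}{(\alpha+\gamma)^2}\,\ptop$ because $1-w_3=(\alpha+\gamma)/16$; this matches the product shape of the stated formulas. The two routes are tied together by $\partial_u\Phi_r(w_{k+1})\,\partial_v\Psi_r(w_k)=1$, and your two radical identities are precisely this relation at $k=1$ and $k=2$.

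Your positivity argument for $\Delta$-analyticity on the whole circle $|x|=r$ goes beyond what the paper's proof says. Note, though, that with $|v|\le\frac12$ and $|x|\le\frac1{16}$ the crude bound $(1-|v|)^2-4|x|$ is only $\ge0$, not strictly positive. For $k\ge1$, use $|H_k(x)|\le w_k\le w_1<\frac12$. For the first step, use the exact identity $(1-G(x))^2+4x=\frac{1+\sqrt{1-16x}}{2}$, whose modulus is at least $\frac12$ on the slit plane.

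Two slips need fixing.

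\begin{itemize}
\item In Step~1 the index should be $H_k$, not $H_{3-k}$. Here $H_k=\Psi_x^{k}(G)$ satisfies $H_k(r)=w_k$ and has singular coefficient $\frac12D_k$. That is what your parenthetical $H_1(r)=w_1$ and your identifications $\ptop=D_3$, $\pbot=1-D_1$ actually use.
\item In Step~3 the radicals are evaluated incorrectly. The correct values are $\beta\approx3.1350$, $\alpha\approx5.5492$, $\gamma\approx6.8406$. Your $\alpha\approx6.5$ is inconsistent even with your own $\beta$, since $1+1.4142+3.0858=5.5$. Your three values substituted into the formula give about $0.83$, so as written that step confirms nothing. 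With the correct values one gets $\ptop\approx0.684122$, together with $p_a\approx0.098126$, $p_b\approx0.071305$ and $\pbot\approx0.146447$. These match the limiting row of the convergence table.
\end{itemize}
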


\begin{proof}
The constant \(p_1\) is given by Proposition~\ref{prop:p1-product}.
For the remaining components, use the recovery relations
\[
B(x)=\frac{x}{1-T(x)},\qquad
A(x)=\frac{x}{1-(B(x)+T(x))},\qquad
F(x)=\frac{x}{1-(A(x)+B(x)+T(x))},
\]
together with the fact that \(T(x)\) has a square--root singularity at \(x=r\) with
\(T(r)\in(0,1)\). Each of \(B,A,F\) is therefore analytic in \(T\) near \(T(r)\) and inherits
the same dominant square--root singularity at \(x=r\). Applying the transfer theorem to
each component and taking coefficient ratios gives the existence of \(p_0,p_a,p_b\), and a
direct simplification yields the stated closed forms. Finally,
\(\pbot +p_a+p_b+\ptop =1\) follows from \(G=F+A+B+T\).
\end{proof}

\begin{corollary}[False rows and non--true rows]\label{cor:false-rows-godel4}
Define \(u_n:=f_n\) and \(s_n:=g_n-t_n=f_n+a_n+b_n\). Then
\[
\lim_{n\to\infty}\frac{u_n}{g_n}=\pbot,
\qquad
\lim_{n\to\infty}\frac{s_n}{g_n}=1-\ptop=\pbot+p_a+p_b,
\]
so the limiting proportion of non--true rows is
\[
1-\ptop \approx 0.315877789266982214\ldots\, .
\]
Equivalently,
\[
S(x):=\sum_{n\ge1}s_nx^n=G(x)-T(x).
\]
\end{corollary}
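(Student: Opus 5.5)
The corollary follows directly from Theorem~\ref{thm:godel4-exact-proportions}; no new singularity analysis is needed. I would first note that $u_n=f_n$ by definition, so $u_n/g_n\to\pbot$ is simply the existence of $\pbot$ established there.

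For $s_n$, I would use the identity $g_n=f_n+a_n+b_n+t_n$ to write $s_n=g_n-t_n$. Dividing by $g_n$ gives
\[
\frac{s_n}{g_n}=1-\frac{t_n}{g_n}\longrightarrow 1-\ptop ,
\]
and the limit exists by Theorem~\ref{thm:limit-ones-godel4}. Alternatively, writing $s_n/g_n=(f_n+a_n+b_n)/g_n$ and using linearity of limits gives $\pbot+p_a+p_b$. The two expressions agree because $\pbot+p_a+p_b+\ptop=1$. The generating function identity $S(x)=G(x)-T(x)$ follows by summing $s_n=g_n-t_n$ against $x^n$ over $n\ge1$, since all series start at $n=1$.

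The only step needing care is the numerical value. I would evaluate $\beta=\sqrt{7+2\sqrt2}$, $\alpha=1+\sqrt2+\beta$ and $\gamma=\sqrt{16+\alpha^2}$ to high precision. I would then insert them into the closed form $\ptop=(2+\sqrt2)\alpha(\alpha+\gamma)/(16\beta\gamma)$ from Proposition~\ref{prop:p1-product}, which gives $\ptop\approx0.684122210733017786$, and subtract from $1$ to get $0.315877789266982214\ldots$.

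As a consistency check, I would confirm this against the sum of the closed forms for $\pbot$, $p_a$ and $p_b$, and against the $n=20$ ratios: the observed $1-t_{20}/g_{20}\approx0.3233$ should approach the limit at the expected $O(1/n)$ rate.
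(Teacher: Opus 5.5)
Your proposal is correct and is essentially the paper's argument. The paper gives no separate proof of this corollary and treats it as an immediate consequence of Theorem~\ref{thm:godel4-exact-proportions}: linearity of limits, the identity $\pbot+p_a+p_b+\ptop=1$, termwise summation of $s_n=g_n-t_n$ to get $S=G-T$, and numerical evaluation of the closed form for $\ptop$. Your cross-check via the closed forms for $\pbot$, $p_a$, $p_b$ is also consistent, since they sum to approximately $0.31588$.
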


\begin{theorem}[Exact asymptotic constants]\label{thm:godel4-exact-asymptotics}
As \(n\to\infty\),
\[
g_n\sim \frac{16^n}{4\sqrt{\pi}\,n^{3/2}},
\]
and each component satisfies
\[
f_n\sim \frac{\pbot}{4}\,\frac{16^n}{\sqrt{\pi}\,n^{3/2}},\qquad
a_n\sim \frac{p_a}{4}\,\frac{16^n}{\sqrt{\pi}\,n^{3/2}},\qquad
b_n\sim \frac{p_b}{4}\,\frac{16^n}{\sqrt{\pi}\,n^{3/2}},\qquad
t_n\sim \frac{\ptop}{4}\,\frac{16^n}{\sqrt{\pi}\,n^{3/2}},
\]
where \(\pbot, p_a,p_b,\ptop \) are as in Theorem~\ref{thm:godel4-exact-proportions}.
\end{theorem}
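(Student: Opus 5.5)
The plan is to derive all four component asymptotics from one square-root expansion, then read off the constants $p_\bullet/4$ from the limiting ratios.

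\textbf{Total sequence.} By \eqref{eq:G-local}, $G(x)=\tfrac12-\tfrac12\sqrt{1-x/r}+O(1-x/r)$ with $r=1/16$. The function $G$ is $\Delta$-analytic at $r$, since $\sqrt{1-16x}$ has no other singularity. The transfer theorem \cite[Ch.~VI]{FlajoletSedgewick2009}, using $[x^n]\sqrt{1-x/r}\sim -r^{-n}/(2\sqrt\pi\,n^{3/2})$, gives
\[
g_n\sim \frac{16^n}{4\sqrt\pi\,n^{3/2}}.
\]
One can also check this directly from $g_n=4^nC_{n-1}$ and Stirling's formula.

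\textbf{Singular expansions of the components.} I would next show that each of $T,B,A,F$ admits an expansion $c_\bullet-\lambda_\bullet\sqrt{1-x/r}+O(1-x/r)$ in a $\Delta$-domain at $r$.
\begin{itemize}
\item For $T=\Psi_x^{3}(G)$, each $\Psi_x(v)$ is analytic in $(x,v)$ near $(r,w_k)$, because $(1-w_k)^2+4r>0$ at the real points computed in Lemma~\ref{lem:iterates-at-r}. Composing $\sqrt{1-x/r}$-expansions through these analytic maps preserves the form. The chain rule gives $\lambda_T=\tfrac12\prod_{k=0}^2\partial_v\Psi_r(w_k)=\tfrac12\ptop$, by Proposition~\ref{prop:p1-product}.
\item The dependence of $\Psi_x$ on $x$ is analytic and contributes only $O(1-x/r)$ terms.
\item $B,A,F$ follow from Proposition~\ref{prop:recover-godel4}. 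The maps $u\mapsto x/(1-u)$ are analytic near $u=T(r)$, $H_2(r)=w_2$ and $H_1(r)=w_1$, all of which are $<1$.
\end{itemize}
Transferring each expansion gives $[x^n]\,\bullet\sim \lambda_\bullet\,16^n/(2\sqrt\pi\,n^{3/2})$. Comparing with $g_n$ (where $\lambda_G=\tfrac12$) then gives $\lambda_\bullet=\tfrac12 p_\bullet$, with $p_\bullet$ the limits of Theorem~\ref{thm:godel4-exact-proportions}. Substituting yields the asserted forms $\frac{p_\bullet}{4}\frac{16^n}{\sqrt\pi n^{3/2}}$.

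\textbf{Main obstacle: analyticity on a full $\Delta$-domain.} The transfer theorem needs analytic continuation of $T,B,A,F$ to a $\Delta$-domain at $r$, not just the local expansion as $x\to r^-$. I would handle this in two steps.
\begin{itemize}
\item The coefficients are nonnegative and bounded by $g_n$. Hence each series converges for $|x|\le r$ and is dominated there by $G$.
\item On $|x|\le r$ with $x\neq r$, the arguments of the square roots $(1-v)^2+4x$ must avoid $0$ and the negative real axis, and the denominators $1-H_k$ must not vanish. For the denominators, note that $|H_k(x)|\le H_k(|x|)\le H_k(r)=w_k<1$, so $1-H_k\neq 0$.
\end{itemize}
For the radicals, I would argue that the formal identity $\Phi_x(H_{k+1})=H_k$ continues analytically. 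The chosen branch of $\Psi_x$ is then forced, since $H_{k+1}$ is a convergent power series. The only remaining singularities therefore come from $G$ itself, i.e.\ at $x=r$.

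This shows $T,B,A,F$ are analytic in $|x|<r+\epsilon$ minus a slit at $r$, which is enough for the transfer theorem. Positivity of each $\lambda_\bullet$ (every factor $\partial_v\Psi_r(w_k)>0$) guarantees that the asymptotic forms are nondegenerate.
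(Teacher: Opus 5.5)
Your route is the paper's. Its proof gets $g_n$ by transfer from \eqref{eq:G-local} and multiplies by the ratio limits of Theorem~\ref{thm:godel4-exact-proportions}. That theorem's proof is the same propagation of the $\sqrt{1-x/r}$ term through $\Psi_x^{3}$ and the maps $x/(1-u)$ that you describe, and your identification $\lambda_\bullet=\tfrac12 p_\bullet$ is correct. The paper never verifies $\Delta$-analyticity: the argument in Theorem~\ref{thm:blackbox-sqrt} only controls $x$ near $r$. You are right to flag it, but the step you give for the radicals does not establish it.

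Knowing that $H_{k+1}$ is a convergent power series with $\Phi_x(H_{k+1})=H_k$ selects the correct root inside $|x|<r$. It does not exclude a branch point at some $x_0\neq r$ with $|x_0|=r$. If $(1-H_k(x_0))^2+4x_0=0$, then $H_{k+1}=\tfrac12\bigl(1+H_k\pm\sqrt{(1-H_k)^2+4x}\bigr)$ can be singular at $x_0$. That would add a contribution of the same exponential order $16^n$ and spoil the asymptotics.

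The identity $H_k=\Phi_x(H_{k+1})$ transports analyticity from $H_{k+1}$ to $H_k$, which is the wrong direction. Continuing $H_{k+1}$ needs the implicit-function condition $\partial_u\Phi_x(H_{k+1})=1+x/(1-H_{k+1})^2\neq0$, which is the same discriminant condition.

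The missing check is short with the bound you already have. For $k\ge1$, $w_k\le w_1<\tfrac12$ gives $|(1-H_k(x))^2+4x|\ge(1-w_k)^2-4r>0$ on $|x|\le r$. For $k=0$, the relation $G^2-G+4x=0$ gives $(1-G)^2+4x=1-G\neq0$.

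With the discriminant nonzero on the closed disk, the continuation goes through as follows. At each boundary point $x_0\neq r$, exactly one of the two distinct local analytic roots agrees with $H_{k+1}$ inside the disk. Any two overlapping small disks centred on the circle meet in a connected set that intersects $|x|<r$, so these local continuations glue by the identity theorem. Together with your local analysis at $r$, this gives the $\Delta$-domain for $H_1,H_2,H_3=T$, and hence for $B,A,F$.
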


\begin{proof}
The asymptotic for \(g_n\) follows from \eqref{eq:G-local} by the transfer theorem, \cite[Chapter.~VI]{FlajoletSedgewick2009}.
Each component has the same dominant square--root singularity at \(x=r\), and the limiting
ratios \(f_n/g_n\to \pbot \), \(a_n/g_n\to p_a\), \(b_n/g_n\to p_b\), \(t_n/g_n\to \ptop \) hold by
Theorem~\ref{thm:godel4-exact-proportions}. The stated component asymptotics follow
immediately.
\end{proof}

\newpage
\begin{table}[t]
\centering
\small
\renewcommand{\arraystretch}{1.05}
\begin{tabular}{rcccc}
\hline
\(n\) & \(f_n/g_n\) & \(a_n/g_n\) & \(b_n/g_n\) & \(t_n/g_n\) \\
\hline
10  & 0.153354 & 0.103186 & 0.074472 & 0.668988 \\
20  & 0.149830 & 0.100597 & 0.072869 & 0.676705 \\
30  & 0.148687 & 0.099760 & 0.072343 & 0.679210 \\
40  & 0.148121 & 0.099347 & 0.072082 & 0.680450 \\
50  & 0.147783 & 0.099100 & 0.071926 & 0.681191 \\
75  & 0.147335 & 0.098774 & 0.071718 & 0.682173 \\
100 & 0.147112 & 0.098611 & 0.071615 & 0.682662 \\
150 & 0.146890 & 0.098449 & 0.071511 & 0.683150 \\
200 & 0.146779 & 0.098368 & 0.071460 & 0.683394 \\
250 & 0.146712 & 0.098320 & 0.071429 & 0.683540 \\
\hline
\(\infty\) & 0.146447 & 0.098126 & 0.071305 & 0.684122 \\
\hline
\end{tabular}
\caption{Convergence of the empirical proportions \(f_n/g_n,a_n/g_n,b_n/g_n,t_n/g_n\) to the
limit (convergence) constants \(\pbot ,p_a,p_b,\ptop \). The limiting values are shown to six
decimal places (their exact nested--radical forms are given in
Theorem~\ref{thm:godel4-exact-proportions}).}
\label{tab:g4-proportions}
\end{table}


\oneLinePart{Gödel $m$--Valued Implication}
\label{sec:godel-m}

\section{ $\;$ Recurrences, generating functions, and the $\Phi$--calculus}

Fix an integer \(m\ge2\) and let
\[
V_m=\{v_0<v_1<\cdots<v_{m-1}\},\qquad v_0=0,\quad v_{m-1}=1,
\]
be the Gödel chain of \(m\) truth values. The Gödel implication \(\Rightarrow_G:V_m\times V_m\to V_m\) is
\[
v_p\Rightarrow_G v_q=
\begin{cases}
v_{m-1},& p\le q,\\
v_q,& p>q.
\end{cases}
\]

Let \(\mathcal{B}_n\) be the set of full bracketings of
\[
p_1\Rightarrow p_2\Rightarrow\cdots\Rightarrow p_n,
\]
so \(|\mathcal{B}_n|=C_{n-1}\). A valuation is a map \(\nu:\{p_1,\dots,p_n\}\to V_m\), hence there are \(m^n\) valuations.

For each \(j\in\{0,1,\dots,m-1\}\) and \(n\ge1\), let \(g_n^{(j)}\) denote the total number of occurrences of the value \(v_j\) in the output columns of the truth tables of all bracketings in \(\mathcal{B}_n\), taken over all valuations. Define the total
\[
g_n:=\sum_{j=0}^{m-1} g_n^{(j)}.
\]
Since there are \(C_{n-1}\) bracketings and \(m^n\) valuations, we have
\[
g_n=m^n C_{n-1}.
\]

\subsection*{Catalan recurrences}
For \(n\ge2\), each bracketing splits uniquely into a left part on \(i\) variables and a right part on \(n-i\) variables, \(1\le i\le n-1\). Consequently,
\[
g_n=\sum_{i=1}^{n-1} g_i g_{n-i}.
\]

For each \(j\le m-2\), the value \(v_j\) can arise only when the right subformula evaluates to \(v_j\) and the left subformula evaluates to a \emph{strictly larger} truth value. Hence, for \(n\ge2\) and \(j=0,1,\dots,m-2\),
\begin{equation}\label{eq:godel-m-rec}
g_n^{(j)}=\sum_{i=1}^{n-1}\Big(\sum_{p=j+1}^{m-1} g_i^{(p)}\Big)\, g_{n-i}^{(j)}.
\end{equation}
The top value is determined by complement:
\begin{equation}\label{eq:godel-m-top}
g_n^{(m-1)}=g_n-\sum_{j=0}^{m-2} g_n^{(j)}.
\end{equation}
Initial conditions are
\[
g_1^{(j)}=1\quad (j=0,1,\dots,m-1),\qquad g_1=m.
\]

\subsection*{Generating functions}
Define the component generating functions
\[
G_j(x):=\sum_{n\ge1} g_n^{(j)}x^n\qquad (j=0,1,\dots,m-1),
\]
and the total
\[
G(x):=\sum_{n\ge1} g_n x^n=\sum_{j=0}^{m-1}G_j(x).
\]
Since \(g_n=m^n C_{n-1}\), we have the closed form
\begin{equation}\label{eq:godel-m-G}
G(x)=\sum_{n\ge1}m^nC_{n-1}x^n=\frac{1-\sqrt{1-4mx}}{2}.
\end{equation}

Multiplying \eqref{eq:godel-m-rec} by \(x^n\) and summing over \(n\ge2\) gives, for each \(j=0,1,\dots,m-2\),
\begin{equation}\label{eq:godel-m-Gj}
G_j(x)=x+H_{j+1}(x)\,G_j(x),
\qquad
H_{j+1}(x):=\sum_{p=j+1}^{m-1}G_p(x).
\end{equation}
Equivalently,
\[
G_j(x)=\frac{x}{1-H_{j+1}(x)}\qquad (0\le j\le m-2),
\qquad
G_{m-1}(x)=G(x)-\sum_{j=0}^{m-2}G_j(x).
\]

\subsection*{The $\Phi$--calculus (finite depth \(m-1\))}
Define the tail sums
\[
H_k(x):=\sum_{j=k}^{m-1}G_j(x)\qquad (k=0,1,\dots,m-1),
\]
so that \(H_0(x)=G(x)\) and \(H_{m-1}(x)=G_{m-1}(x)\). Define also
We use the map $\Phi_x(u):=u+\frac{x}{1-u}$ defined earlier in \eqref{eq:Phi-def}.

\begin{proposition}[Tail--sum iteration]\label{prop:godel-m-phi}
For each \(k=0,1,\dots,m-2\),
\[
H_k(x)=\Phi_x\!\big(H_{k+1}(x)\big).
\]
In particular, writing \(U(x):=H_{m-1}(x)=G_{m-1}(x)\),
\[
\boxed{\,G(x)=\Phi_x^{\,m-1}\!\big(U(x)\big).\,}
\]
\end{proposition}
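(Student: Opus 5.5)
The argument is the same telescoping step used in Proposition~\ref{prop:phi-reduction-godel4}, now carried out for general $m$. Fix $k\in\{0,1,\dots,m-2\}$. By definition of the tail sums,
\[
H_k(x)=G_k(x)+H_{k+1}(x).
\]
The functional equation \eqref{eq:godel-m-Gj} with $j=k$ reads $G_k(x)=x+H_{k+1}(x)G_k(x)$. Since $H_{k+1}$ has zero constant term (every $G_p$ starts at $n=1$), the series $1-H_{k+1}(x)$ is invertible in the ring of formal power series. We may therefore rearrange to
\[
G_k(x)=\frac{x}{1-H_{k+1}(x)}.
\]
Substituting this into $H_k=G_k+H_{k+1}$ gives
\[
H_k(x)=H_{k+1}(x)+\frac{x}{1-H_{k+1}(x)}=\Phi_x\big(H_{k+1}(x)\big),
\]
which is the first claim.

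The boxed identity follows by a downward induction on $k$, starting from $H_{m-1}=U$. The inductive hypothesis is $H_k=\Phi_x^{\,m-1-k}(U)$. The case $k=m-1$ is trivial, and applying the one-step relation advances the hypothesis from $k+1$ to $k$. At $k=0$ this gives $G=H_0=\Phi_x^{\,m-1}(U)$.

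It remains to justify the iterated composition. Here $\Phi_x(u)$ is applied to power series $u$ with $u(0)=0$; then $x/(1-u)$ is a well-defined formal series and $\Phi_x(u)$ again has zero constant term, so the iteration stays inside the same ring.

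The only step needing care is the derivation of \eqref{eq:godel-m-Gj} itself, which the paper already gives: the $n=1$ term contributes exactly $x$ because $g_1^{(j)}=1$. Since that equation is already established, there is no real obstacle. The remaining check is just the invertibility of $1-H_{k+1}$ noted above.
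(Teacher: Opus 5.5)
Your proposal is correct and follows the paper's own route. You rearrange $G_k=x+H_{k+1}G_k$ to $G_k=x/(1-H_{k+1})$, add $H_{k+1}$ to obtain $H_k=\Phi_x(H_{k+1})$, and iterate downward from $k=m-2$; your remarks that $1-H_{k+1}$ is invertible and that $\Phi_x$ preserves zero-constant-term series make explicit what the paper leaves implicit.
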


\begin{proof}
From \eqref{eq:godel-m-Gj} we have \((1-H_{k+1})G_k=x\), hence \(G_k=\frac{x}{1-H_{k+1}}\).
Then
\[
H_k=G_k+H_{k+1}=H_{k+1}+\frac{x}{1-H_{k+1}}=\Phi_x(H_{k+1}).
\]
Iterating from \(k=m-2\) down to \(k=0\) gives \(H_0=\Phi_x^{m-1}(H_{m-1})\), i.e.
\(G=\Phi_x^{m-1}(U)\).
\end{proof}

\begin{proposition}[Explicit inverse and nested radicals]\label{prop:godel-m-psi}
The map \(\Phi_x\) is invertible on formal power series with zero constant term. Its inverse
branch is
\[
\Psi_x(v):=\frac{1+v-\sqrt{(1-v)^2+4x}}{2},
\qquad \Psi_x(0)=0,
\]
so that \(\Phi_x(\Psi_x(v))=v\). Consequently,
\[
\boxed{\,U(x)=G_{m-1}(x)=\Psi_x^{\,m-1}\!\big(G(x)\big),\qquad
G(x)=\frac{1-\sqrt{1-4mx}}{2}.\,}
\]
Moreover, once \(U\) is known, all tail sums are recovered by
\[
H_{m-1}=U,\qquad H_k=\Phi_x(H_{k+1})\ (k=m-2,\dots,0),
\]
and the components by
\[
G_k=H_k-H_{k+1}\quad (k=0,\dots,m-2),\qquad G_{m-1}=H_{m-1}.
\]
\end{proposition}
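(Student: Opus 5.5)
The plan mirrors the proof of Proposition~\ref{prop:Psi-inverse-godel4}, extended to depth $m-1$ and with a formal-power-series justification added. First, I will show that $\Phi_x$ is invertible on power series in $x$ with zero constant term. If $v=\Phi_x(u)=u+\frac{x}{1-u}$, multiplying by $1-u$ gives the quadratic
\[
u^2-(1+v)u+(v-x)=0,
\]
whose roots are $u=\frac{1+v\pm\sqrt{(1-v)^2+4x}}{2}$. Take $v$ a power series with $v(0)=0$. Then $(1-v)^2+4x$ has constant term $1$, so its square root is a well-defined formal power series with constant term $1$. The plus branch has constant term $1$, which is not admissible, and the minus branch has constant term $0$. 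Hence $\Psi_x(v)$ is the unique solution with zero constant term, and $\Phi_x(\Psi_x(v))=v$ holds by construction.

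Conversely, for $u$ with zero constant term, $1-u$ is invertible, so $\Phi_x(u)$ is a power series with zero constant term. The quadratic argument, read in reverse, gives $\Psi_x(\Phi_x(u))=u$ by uniqueness of the zero-constant-term root. So $\Phi_x$ and $\Psi_x$ are mutually inverse bijections on $x\,\mathbb{Q}[[x]]$, and $\Psi_x(0)=0$.

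Next, every tail sum $H_k$ has zero constant term, since each $G_j$ starts at $n=1$. Proposition~\ref{prop:godel-m-phi} gives $H_k=\Phi_x(H_{k+1})$, and applying $\Psi_x$ yields $H_{k+1}=\Psi_x(H_k)$. Iterating $m-1$ times from $H_0=G$ gives $U=H_{m-1}=\Psi_x^{m-1}(G)$, with $G$ given in closed form by \eqref{eq:godel-m-G}. The recovery formulas follow directly. $H_k=\Phi_x(H_{k+1})$ is Proposition~\ref{prop:godel-m-phi} itself, and $G_k=H_k-H_{k+1}$ together with $G_{m-1}=H_{m-1}$ follow from the definition of the tail sums.

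The only step needing care is branch selection and well-definedness of the formal square root. This is handled by the constant-term argument above. I would also note that uniqueness of the power series solution of the system \eqref{eq:godel-m-Gj} (via the recurrences) ensures that the recovered $G_k$ are the actual counting series.
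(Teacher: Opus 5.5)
Your proposal is correct and follows the paper's own argument: solve the quadratic $u^2-(1+v)u+(v-x)=0$, select the minus branch by the zero-constant-term condition, and apply $\Psi_x$ to the tail-sum iteration $m-1$ times. Your two extra checks make explicit what the paper's proof uses tacitly when it applies $\Psi_x$ to $G=\Phi_x^{m-1}(U)$. The first is that the formal square root of a series with constant term $1$ is well defined. The second is the left-inverse identity $\Psi_x(\Phi_x(u))=u$, which you obtain from uniqueness of the zero-constant-term root.
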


\begin{proof}
The identity \(v=\Phi_x(u)=u+\frac{x}{1-u}\) is equivalent to
\(u^2-(1+v)u+(v-x)=0\), hence
\[
u=\frac{1+v\pm\sqrt{(1-v)^2+4x}}{2}.
\]
The choice \(u(0)=0\) forces the minus sign, giving \(u=\Psi_x(v)\). Applying \(\Psi_x\)
successively \(m-1\) times to \(G=\Phi_x^{m-1}(U)\) gives \(U=\Psi_x^{m-1}(G)\).
The recovery formulas follow from the definitions.
\end{proof}

\section{ $\;$Asymptotics}

In this section we investigate how the limiting fraction of outputs equal to \(1=v_{m-1}\)
depends on \(m\). Fix \(m\ge2\) and let \(v_0<\cdots<v_{m-1}=1\) be the Gödel chain.  Let
\(g_n=m^nC_{n-1}\) be the total number of truth-table entries across all Gödel--\(m\) truth
tables for fully bracketed implications on \(n\) variables.  For \(j\in\{0,1,\dots,m-1\}\)
define
\[
g_n^{(j)}:=\#\{\text{output entries equal to }v_j\},
\]
and call
\[
\ptop(m):=\lim_{n\to\infty}\frac{g_n^{(m-1)}}{g_n}
\]
the \emph{convergence constant} (limiting proportion) of outputs equal to \(1\).
Since
\[
g_n\sim \frac{(4m)^n}{4\sqrt{\pi}\,n^{3/2}}\qquad (n\to\infty),
\]
the corresponding asymptotic coefficient is \(c_1(m)=\ptop(m)/4\) in
\[
g_n^{(m-1)}\sim \frac{\ptop(m)}{4}\cdot \frac{(4m)^n}{\sqrt{\pi}\,n^{3/2}}
\qquad (n\to\infty).
\]

\medskip

\noindent
For exact computation of \(\ptop(m)\) via the \(\Psi\)--chain, we set \(r=\frac{1}{4m}\), so \(4r=\frac{1}{m}\), and note that
\[
G(x)=\frac{1-\sqrt{1-4mx}}{2}
\quad\Longrightarrow\quad
G(r)=\frac12.
\]
Recall the inverse map
\[
\Psi_x(v)=\frac{1+v-\sqrt{(1-v)^2+4x}}{2}.
\]
Define the critical iterates
\[
w_0:=G(r)=\frac12,\qquad
w_{k+1}:=\Psi_r(w_k)\quad (k=0,1,\dots,m-2).
\]
Then
\[
\partial_v\Psi_x(v)=\frac12\left(1+\frac{1-v}{\sqrt{(1-v)^2+4x}}\right),
\]
and at \(x=r\) this becomes
\[
\partial_v\Psi_r(v)=\frac12\left(1+\frac{1-v}{\sqrt{(1-v)^2+\frac{1}{m}}}\right).
\]

\begin{proposition}[Limit proportion of $1$'s]\label{prop:p1m-product}
The convergence constant \(\ptop(m)=\lim_{n\to\infty} g_n^{(m-1)}/g_n\) exists and is given by
\[
\boxed{\;
\ptop(m)=\prod_{k=0}^{m-2}\partial_v\Psi_r(w_k)
=\prod_{k=0}^{m-2}
\frac12\left(1+\frac{1-w_k}{\sqrt{(1-w_k)^2+\frac{1}{m}}}\right).
\;}
\]
\end{proposition}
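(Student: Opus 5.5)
The plan is to start from Proposition~\ref{prop:godel-m-psi}, which gives \(U(x)=G_{m-1}(x)=\Psi_x^{\,m-1}(G(x))\), and push the square--root singularity of \(G\) at \(r=1/(4m)\) through the composition. Writing \(\delta:=\sqrt{1-x/r}\), we have \(G(x)=\tfrac12-\tfrac12\delta\) exactly. Define the intermediate functions \(W_0(x):=G(x)\) and \(W_{k+1}(x):=\Psi_x(W_k(x))\), so that \(W_k=H_{m-1-k}\) and \(W_{m-1}=U\).

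The first step is to check that \((x,v)\mapsto\Psi_x(v)\) is analytic near every point \((r,w_k)\). This holds provided \((1-w_k)^2+4r>0\), which is automatic since \(4r=1/m>0\). We also need the iterates to stay in the region where the radical is real and positive. An easy induction gives \(0<w_{k+1}<w_k\le \tfrac12\): indeed \(\Psi_r(v)<v\) whenever \(4r>0\), because \(\sqrt{(1-v)^2+4r}>1-v\), and \(\Psi_r(v)>0\) for \(0<v<1\), because \((1+v)^2>(1-v)^2+4r\) reduces to \(v>r\) and \(w_k>r\). I expect this to be a short verification; if positivity causes trouble, only the nonvanishing of the radical is actually needed.

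Next, expand each \(W_k\) jointly in \((x,\delta)\) around \(x=r\). By induction on \(k\), with \(x=r(1-\delta^2)\), we get
\[
W_k(x)=w_k-\lambda_k\,\delta+O(\delta^2),\qquad \lambda_{k+1}=\partial_v\Psi_r(w_k)\,\lambda_k,\qquad \lambda_0=\tfrac12 .
\]
The dependence of \(\Psi_x\) on \(x\) contributes only \(O(x-r)=O(\delta^2)\), so a first-order Taylor expansion in \(v\) gives the recursion. Hence \(\lambda_{m-1}=\tfrac12\prod_{k=0}^{m-2}\partial_v\Psi_r(w_k)\), and every factor is positive and lies in \((\tfrac12,1)\).

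The main obstacle is justifying the transfer step rather than the computation above. We need \(U\) to be \(\Delta\)-analytic at \(r\): analytic in \(|x|<r+\eta\) minus a slit \([r,\infty)\) neighbourhood, with no other singularity on \(|x|=r\). I would argue as follows. \(G\) is \(\Delta\)-analytic with its only singularity at \(r\). Each \(\Psi_x\) is analytic as long as \((1-W_k)^2+4x\ne0\). On the closed disc \(|x|\le r\), the coefficients of \(U\) are nonnegative and \(U\) is bounded by \(G\), so \(U\) converges there. The radicand cannot vanish on \(|x|\le r\) except possibly where the real-positive values force it, which I would rule out by continuity from the positive-coefficient bound \(|W_k(x)|\le W_k(|x|)\le w_k<1\). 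This gives \(|1-W_k|\ge 1-w_k>2\sqrt{r}\ge|2\sqrt{x}|\), since \(1-w_k\ge\tfrac12>1/\sqrt m\) for \(m\ge 5\); the small cases \(m=2,3,4\) need a direct check. By compactness this extends to a \(\Delta\)-domain.

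Finally, the transfer theorem \cite[Ch.~VI]{FlajoletSedgewick2009} gives \(g_n^{(m-1)}\sim \lambda_{m-1}\,r^{-n}/(\Gamma(-\tfrac12)\cdot(-1))\,n^{-3/2}\), while \(g_n\) corresponds to \(\lambda_0=\tfrac12\). The ratio is therefore \(\lambda_{m-1}/\lambda_0=\prod_{k=0}^{m-2}\partial_v\Psi_r(w_k)\). Substituting the explicit derivative formula with \(4r=1/m\) gives the boxed expression.
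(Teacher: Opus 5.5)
Your proposal is correct and is essentially the paper's proof. Like the paper, you iterate $W_{k+1}(x)=\Psi_x(W_k(x))$ from $W_0=G=\tfrac12-\tfrac12\sqrt{1-x/r}$, propagate the $\sqrt{1-x/r}$ coefficient by $\lambda_{k+1}=\partial_v\Psi_r(w_k)\lambda_k$ with $\lambda_0=\tfrac12$, and take the ratio of the transfer-theorem asymptotics for $U$ and $G$. Your extra $\Delta$-analyticity check, via the nonvanishing radicand on $|x|\le r$ (with $m=2,3,4$ left to a direct check), is something the paper never spells out. Note also a harmless indexing slip: $W_k=H_k$, not $H_{m-1-k}$.
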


\begin{proof}
Let \(U(x)=G_{m-1}(x)\). By Proposition~\ref{prop:godel-m-psi} we have
\[
U(x)=\Psi_x^{\,m-1}\!\big(G(x)\big),
\qquad\text{hence}\qquad
g_n^{(m-1)}=[x^n]U(x).
\]
The dominant singularity of \(G(x)=\frac{1-\sqrt{1-4mx}}{2}\) is \(r=\frac{1}{4m}\), and with
\(s:=\sqrt{1-x/r}\) (so \(x=r(1-s^2)\)) we have the local expansion
\[
G(x)=\frac12-\frac12\,s+O(s^2)\qquad (x\to r^-).
\]

Define analytic iterates \(w_0(x):=G(x)\) and \(w_{k+1}(x):=\Psi_x(w_k(x))\) for
\(k=0,1,\dots,m-2\); then \(U(x)=w_{m-1}(x)\) and \(w_k=w_k(r)\) are precisely the critical
values defined above.
Since \(\Psi_x(v)\) is analytic in \((x,v)\) near each \((r,w_k)\) and \(x-r=O(s^2)\), a Taylor
expansion shows that if
\[
w_k(x)=w_k-\kappa_k\,s+O(s^2),
\]
then
\[
w_{k+1}(x)=\Psi_x(w_k(x))
=\Psi_r(w_k)-\partial_v\Psi_r(w_k)\,\kappa_k\,s+O(s^2)
=w_{k+1}-\kappa_{k+1}\,s+O(s^2),
\]
where \(\kappa_{k+1}=\partial_v\Psi_r(w_k)\,\kappa_k\).  (Here the \(x\)-variation contributes
only \(O(x-r)=O(s^2)\) and does not affect the linear term in \(s\).)  With \(\kappa_0=\frac12\)
from the expansion of \(G\), iteration gives
\[
\kappa_{m-1}=\frac12\prod_{k=0}^{m-2}\partial_v\Psi_r(w_k).
\]
Consequently,
\[
U(x)=U(r)-\kappa_{m-1}\sqrt{1-\frac{x}{r}}+O\!\left(1-\frac{x}{r}\right)\qquad (x\to r^-).
\]

By the transfer theorem for square--root singularities \cite[Ch.~VI]{FlajoletSedgewick2009},
\[
g_n=[x^n]G(x)\sim \frac{r^{-n}}{4\sqrt{\pi}\,n^{3/2}},
\qquad
g_n^{(m-1)}=[x^n]U(x)\sim \frac{\kappa_{m-1}\,r^{-n}}{2\sqrt{\pi}\,n^{3/2}}.
\]
Taking the ratio gives the existence of the limit and
\[
\ptop(m)=\lim_{n\to\infty}\frac{g_n^{(m-1)}}{g_n}=2\kappa_{m-1}
=\prod_{k=0}^{m-2}\partial_v\Psi_r(w_k).
\]
Substituting \(\partial_v\Psi_r(v)=\frac12\!\left(1+\frac{1-v}{\sqrt{(1-v)^2+\frac1m}}\right)\)
gives the stated explicit product.
\end{proof}

\FloatBarrier
\noindent
\textbf{Numerics.}
Table~\ref{tab:p1m} lists \(\ptop(m)\) for several values of \(m\).  The values decrease with \(m\)
and stabilise as \(m\) becomes large.
\begin{table}[H]
\centering
\tiny
\renewcommand{\arraystretch}{1.05}
\begin{tabular}{r|cccccccc}
\hline
\(m\) & 2 & 3 & 4 & 5 & 6 & 8 & 10 & 20 \\
\hline
\(\ptop(m)\) &
0.7886751346 &
0.7190479224 &
0.6841222107 &
0.6630699362 &
0.6489761394 &
0.6312738082 &
0.6205979134 &
0.5990962874 \\
\hline
\end{tabular}
\caption{The convergence constant \(\ptop(m)=\lim_{n\to\infty} g_n^{(m-1)}/g_n\), i.e.\ the limiting
proportion of outputs equal to \(1\), for Gödel--\(m\) as \(m\) increases.}
\label{tab:p1m}
\end{table}

\begin{table}[H]
\centering
\small
\renewcommand{\arraystretch}{1.05}
\begin{tabular}{r|ccccc}
\hline
\(m\) & 100 & 200 & 1000 & 10000 & \(1/\sqrt{3}\) \\
\hline
\(\ptop(m)\) &
0.5817218321 &
0.5795375421 &
0.5777879655 &
0.5773940443 &
0.5773502692 \\
\hline
\end{tabular}
\caption{Large-\(m\) behaviour of \(\ptop(m)\), showing convergence to \(1/\sqrt{3}\).}
\label{tab:p1m-large}
\end{table}
\FloatBarrier

\noindent
\textbf{Large-\(m\) limit.}
The \(\Psi_r\)-iteration admits a continuum approximation when \(m\) is large (since \(r=1/(4m)\) is small
but the depth is \(m-1\)). This gives the limiting constant \(p_1(\infty)\).

\begin{theorem}[As $m\to\infty$]
\label{thm:p1limit}
Let \(r=\frac1{4m}\) and let \((w_k)_{k\ge0}\) be defined by \(w_{k+1}=\Psi_r(w_k)\) with
\(w_0=\frac12\). Define
\[
\ptop(m):=\prod_{k=0}^{m-2}\partial_v\Psi_r(w_k).
\]
Then
\[
\boxed{\;\;\ptop(m)\longrightarrow \frac{1}{\sqrt{3}}.\;\;}
\]
\end{theorem}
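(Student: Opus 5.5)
The plan is to pass to the complementary variables $d_k:=1-w_k$, show that after rescaling $t=k/m$ they follow an explicit ODE trajectory, and then read off $\log\ptop(m)$ as a Riemann sum.

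\textbf{Step 1: the recursion for $d_k$.} From the formula for $\Psi_r$ with $4r=1/m$ one gets
\[
d_{k+1}=\tfrac12\Big(d_k+\sqrt{d_k^2+\tfrac1m}\Big),\qquad d_0=\tfrac12 .
\]
The right-hand side exceeds $d_k$, so the sequence is increasing. Its increments are
\[
d_{k+1}-d_k=\tfrac12\Big(\sqrt{d_k^2+\tfrac1m}-d_k\Big)=\frac{1}{2m\big(\sqrt{d_k^2+1/m}+d_k\big)}\le \frac{1}{4m d_k}\le\frac1{2m}.
\]
Hence for $0\le k\le m-1$ we have $\tfrac12\le d_k\le 1$. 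This uniform bound away from $0$ is what makes every later expansion uniform in $k$.

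\textbf{Step 2: comparison with an ODE.} Expanding the increment gives
\[
d_{k+1}-d_k=\frac{1}{4md_k}+O(m^{-2}),
\]
uniformly in $k$. This is an Euler scheme with step $1/m$ for $y'(t)=1/(4y)$, $y(0)=\tfrac12$, whose solution is
\[
y(t)=\sqrt{\tfrac14+\tfrac t2}.
\]
The vector field $1/(4y)$ is Lipschitz on $[\tfrac12,1]$, and $y$ has bounded second derivative there. The local error per step is therefore $O(m^{-2})$, and a discrete Gronwall argument over $m-1$ steps gives
\[
\max_{0\le k\le m-1}\,|d_k-y(k/m)|=O(1/m).
\]

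\textbf{Step 3: the derivative factors.} In the $d$-variables each factor is
\[
\partial_v\Psi_r(w_k)=\tfrac12\Big(1+\frac{d_k}{\sqrt{d_k^2+1/m}}\Big)=1-\frac{1}{4md_k^2}+O(m^{-2}),
\]
again uniformly because $d_k\ge\tfrac12$. Taking logarithms,
\[
\log\ptop(m)=-\frac1m\sum_{k=0}^{m-2}\frac{1}{4d_k^2}+O(1/m).
\]
Step 2 lets us replace $d_k$ by $y(k/m)$ at cost $O(1/m)$ per averaged term. The sum then becomes a Riemann sum converging to
\[
-\int_0^1\frac{dt}{4\big(\tfrac14+\tfrac t2\big)}=-\int_0^1\frac{dt}{1+2t}=-\tfrac12\log 3 .
\]
Hence $\ptop(m)\to 3^{-1/2}$.

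\textbf{Expected main obstacle.} The delicate point is Step 2: controlling the accumulated discretization error over $m$ steps uniformly, rather than just formally passing to the continuum. The a priori bound $d_k\in[\tfrac12,1]$ from Step 1 is what keeps all Taylor remainders uniform. If the Gronwall bookkeeping proves awkward, I would instead exploit the exact identity
\[
\sqrt{d_k^2+1/m}=2d_{k+1}-d_k ,
\]
which gives
\[
d_{k+1}^2-d_k^2=\frac{1}{2m}+O(m^{-2}).
\]
This already yields $d_k^2=\tfrac14+\tfrac{k}{2m}+O(1/m)$ directly, by telescoping, with no ODE argument needed.
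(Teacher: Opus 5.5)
Your proposal is correct and follows essentially the same route as the paper: the substitution $u_k=1-w_k$, the a priori bound $u_k\in[\tfrac12,1]$, the Euler-scheme comparison with $u'=1/(4u)$ via a discrete Gronwall estimate, the uniform expansion $\partial_v\Psi_r(w_k)=1-\frac{1}{4mu_k^2}+O(m^{-2})$, and the Riemann-sum evaluation giving $-\tfrac12\log 3$. Your justification of the upper bound through the increment estimate $d_{k+1}-d_k\le\frac{1}{2m}$ is sounder than the paper's inequality chain, which as written only yields $u_{k+1}\le u_k+\tfrac12$. Your fallback identity, equivalently the exact relation $d_{k+1}(d_{k+1}-d_k)=\frac{1}{4m}$, would let you bypass the Gronwall step entirely.
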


\begin{proof}
Write \(u_k:=1-w_k\). In our situation \(\Psi_r\) admits the closed form
\[
\Psi_r(w)=\frac{1+w-\sqrt{(1-w)^2+\frac1m}}{2},
\qquad\text{so that}\qquad
u_{k+1}=\frac{u_k+\sqrt{u_k^2+\frac1m}}{2},
\]
and
\[
\partial_v\Psi_r(w)
=\frac12\left(1+\frac{u}{\sqrt{u^2+\frac1m}}\right),
\qquad u=1-w.
\]
Since \(u_{k+1}-u_k=\bigl(\sqrt{u_k^2+\frac1m}-u_k\bigr)/2\ge0\), the sequence \((u_k)\) is increasing.
Moreover \(u_0=\frac12\), and if \(u_k\le 1\) then
\[
u_{k+1}\le \frac{u_k+\sqrt{u_k^2+1}}{2}\le \frac{u_k+(u_k+1)}{2}\le 1.
\]
Hence
\[
\frac12\le u_k\le 1\qquad (0\le k\le m).
\]

\medskip
\noindent
\emph{Uniform expansion of \(\log(\partial_v\Psi_r(w_k))\).}
Let \(\varepsilon:=\frac{1}{m u^2}\). For \(u\in[1/2,1]\) we have \(u^2\ge 1/4\), hence
\[
0\le \varepsilon=\frac{1}{m u^2}\le \frac{4}{m},
\]
so \(\varepsilon\to0\) uniformly as \(m\to\infty\). Using
\[
\frac{u}{\sqrt{u^2+\frac1m}}=\frac{1}{\sqrt{1+\varepsilon}}
=1-\frac{\varepsilon}{2}+O(\varepsilon^2),
\]
we obtain, uniformly for \(u\in[1/2,1]\),
\begin{equation}
\label{eq:derivexp_rewrite}
\partial_v\Psi_r(1-u)
=1-\frac{1}{4m u^2}+O\!\left(\frac{1}{m^2}\right).
\end{equation}
Set
\[
x_k:=\frac{1}{4m u_k^2},\qquad \delta_k:=\partial_v\Psi_r(w_k)-\bigl(1-x_k\bigr).
\]
Then \(0\le x_k\le 1/m\) and, by \eqref{eq:derivexp_rewrite}, \(|\delta_k|\le C/m^2\) uniformly for \(k\le m\).
In particular \(x_k-\delta_k=O(1/m)\) uniformly, hence for \(m\) large enough \(\partial_v\Psi_r(w_k)\in(1/2,1)\)
uniformly and we may use \(\log(1-y)=-y+O(y^2)\) uniformly:
\[
\log\bigl(\partial_v\Psi_r(w_k)\bigr)
=\log\bigl(1-(x_k-\delta_k)\bigr)
=-(x_k-\delta_k)+O\bigl((x_k-\delta_k)^2\bigr).
\]
Summing from \(k=0\) to \(m-2\) and using
\[
\sum_{k\le m}x_k^2\le m\cdot (1/m^2)=O(1/m),\qquad
\sum_{k\le m}|\delta_k|=O(1/m),
\]
we obtain
\begin{equation}
\label{eq:logp1_rewrite}
\log \ptop(m)
=\sum_{k=0}^{m-2}\log\bigl(\partial_v\Psi_r(w_k)\bigr)
=-\sum_{k=0}^{m-2}\frac{1}{4m u_k^2}+o(1).
\end{equation}

\medskip
\noindent
\emph{Euler approximation.}
Let \(h:=1/m\) and \(t_k:=kh\). Let \(u(t)\) be the solution of
\[
u'(t)=\frac{1}{4u(t)},\qquad u(0)=\frac12,
\]
so \(u(t)^2=\frac14+\frac{t}{2}\) and \(u(t)\in[1/2,1]\) for \(t\in[0,1]\).
From the exact recurrence,
\[
u_{k+1}-u_k=\frac{\sqrt{u_k^2+h}-u_k}{2}
=\frac{h}{2\bigl(\sqrt{u_k^2+h}+u_k\bigr)}
=\frac{h}{4u_k}+O(h^2),
\]
uniformly for \(u_k\in[1/2,1]\). Thus
\[
u_{k+1}=u_k+h f(u_k)+\rho_k,\qquad f(u)=\frac{1}{4u},\qquad |\rho_k|\le C_1 h^2.
\]
By Taylor's theorem applied to the ODE solution,
\[
u(t_{k+1})=u(t_k)+h f\bigl(u(t_k)\bigr)+\eta_k,\qquad |\eta_k|\le C_2 h^2,
\]
uniformly for \(t_k\in[0,1]\). Setting \(e_k:=u_k-u(t_k)\) and subtracting gives
\[
e_{k+1}=e_k+h\bigl(f(u_k)-f(u(t_k))\bigr)+(\rho_k-\eta_k).
\]
Since \(|f'(u)|=1/(4u^2)\le 1\) on \([1/2,1]\), \(f\) is Lipschitz with constant \(L=1\), hence
\[
|e_{k+1}|\le (1+Lh)|e_k|+C_3 h^2.
\]
Iterating and using \(e_0=0\) yields \(|e_k|\le C_4 h\) uniformly for \(0\le k\le m\), i.e.
\[
\max_{0\le k\le m}|u_k-u(t_k)|=O(h)=O(1/m).
\]

\medskip
\noindent
\emph{Riemann-sum limit.}
Since \(t\mapsto 1/u(t)^2\) is continuous on \([0,1]\) and bounded, and \(u_k=u(t_k)+O(1/m)\) uniformly, we have
\[
\frac{1}{m}\sum_{k=0}^{m-2}\frac{1}{u_k^2}
=\frac{1}{m}\sum_{k=0}^{m-2}\frac{1}{u(t_k)^2}+o(1)
\longrightarrow \int_0^1\frac{dt}{u(t)^2}.
\]
Combining this with \eqref{eq:logp1_rewrite} gives
\[
\log \ptop(m)\longrightarrow -\int_0^1\frac{dt}{4u(t)^2}.
\]
Using \(u(t)^2=\frac14+\frac{t}{2}\),
\[
\int_0^1\frac{dt}{4u(t)^2}
=\int_0^1\frac{dt}{1+2t}
=\frac12\log 3.
\]
Hence \(\log \ptop(m)\to -\frac12\log 3\), i.e.
\[
\ptop(m)\to e^{-(1/2)\log 3}=\frac{1}{\sqrt3},
\]
as claimed.
\end{proof}

\begin{proposition}[Limit proportion of false rows]\label{prop:p0m}
Let \(g_n^{(0)}\) denote the total number of occurrences of the bottom truth value \(v_0=0\)
across all Gödel--\(m\) truth tables for fully bracketed implications on \(n\) variables, and set
\[
\pbot(m):=\lim_{n\to\infty}\frac{g_n^{(0)}}{g_n},\qquad g_n=m^nC_{n-1}.
\]
Then
\[
\boxed{\;
\pbot(m)=\frac12\left(1-\sqrt{\frac{m}{m+4}}\right).
\;}
\]
In particular \(\pbot(m)\sim \frac{1}{m}\) as \(m\to\infty\), so the proportion of false outputs
vanishes in the large--\(m\) limit.
\end{proposition}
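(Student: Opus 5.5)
The plan is to isolate the bottom component $G_0$ directly from the $\Phi/\Psi$ calculus, so that only the \emph{first} step of the $\Psi$--chain is involved. Then we rerun the singularity-propagation argument from the proof of Proposition~\ref{prop:p1m-product}, truncated at depth one.

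\textbf{Step 1: express $G_0$ through $\Psi_x$.} By Proposition~\ref{prop:godel-m-phi}, $G=H_0=\Phi_x(H_1)$. By Proposition~\ref{prop:godel-m-psi}, $H_1(x)=\Psi_x(G(x))=w_1(x)$ in the notation of the iterates $w_k(x)$. Since $G_0=H_0-H_1$, this gives
\[
G_0(x)=G(x)-\Psi_x\bigl(G(x)\bigr)=w_0(x)-w_1(x).
\]
Equivalently, $G_0=x/(1-H_1)$. Here $1-w_1(r)=1-\Psi_r(\tfrac12)>0$, so this expression is analytic in $H_1$ near $x=r$.

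\textbf{Step 2: singular expansion.} Set $s=\sqrt{1-x/r}$. From the proof of Proposition~\ref{prop:p1m-product} we already have $w_0(x)=\tfrac12-\kappa_0 s+O(s^2)$ with $\kappa_0=\tfrac12$. We also have $w_1(x)=w_1-\kappa_1 s+O(s^2)$ with $\kappa_1=\partial_v\Psi_r(w_0)\,\kappa_0$, because the $x$-dependence of $\Psi_x$ only contributes $O(s^2)$. Hence
\[
G_0(x)=G_0(r)-\kappa_0\bigl(1-\partial_v\Psi_r(\tfrac12)\bigr)s+O(s^2).
\]
Applying the transfer theorem to $G_0$ and to $G$ and taking the ratio, as in the proof of Proposition~\ref{prop:p1m-product}, shows that the limit exists and equals
\[
\pbot(m)=1-\partial_v\Psi_r(\tfrac12).
\]

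\textbf{Step 3: evaluate.} Using $\partial_v\Psi_r(v)=\tfrac12\bigl(1+\tfrac{1-v}{\sqrt{(1-v)^2+1/m}}\bigr)$ at $v=\tfrac12$, we get
\[
\partial_v\Psi_r(\tfrac12)=\tfrac12\Bigl(1+\tfrac{1/2}{\sqrt{1/4+1/m}}\Bigr)=\tfrac12\Bigl(1+\sqrt{\tfrac{m}{m+4}}\Bigr).
\]
This yields the boxed formula. For the large-$m$ statement, expand $\sqrt{m/(m+4)}=(1+4/m)^{-1/2}=1-2/m+O(m^{-2})$. This gives $\pbot(m)=\tfrac12\cdot\tfrac2m+O(m^{-2})\sim 1/m$.

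\textbf{Main obstacle.} The only point needing care is Step 2: justifying that $G_0$ genuinely has its dominant square-root singularity at $r$ with a nonzero coefficient. This needs that $x\mapsto\Psi_x(G(x))$ is analytic in a $\Delta$-domain around $r$, with no other singularity on $|x|=r$. I would take this from the same analyticity of $\Psi_x(v)$ near $(r,w_0)$ used in Proposition~\ref{prop:p1m-product}, noting that $(1-w_0)^2+4x$ stays away from $0$ there. Nonvanishing of the coefficient is clear, since $\sqrt{m/(m+4)}<1$.
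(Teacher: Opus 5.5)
Your proposal is correct and follows the paper's proof essentially step for step. Like the paper, you write $G_0=G-\Psi_x(G)$, propagate the square-root singular coefficient one step through $\Psi_x$ to get $\pbot(m)=1-\partial_v\Psi_r(\tfrac12)$, and then evaluate. You also make explicit the expansion $\sqrt{m/(m+4)}=1-2/m+O(m^{-2})$ behind $\pbot(m)\sim 1/m$ and flag the $\Delta$-analyticity check, both of which the paper leaves implicit.
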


\begin{proof}
Let \(H_k(x)=\sum_{j=k}^{m-1}G_j(x)\) be the tail sums, so \(H_0(x)=G(x)\) and \(G_0(x)=H_0(x)-H_1(x)\).
Since \(H_0=\Phi_x(H_1)\) with \(\Phi_x(u)=u+\frac{x}{1-u}\) and \(\Psi_x\) is the inverse branch,
we have \(H_1(x)=\Psi_x(G(x))\). Hence
\[
G_0(x)=G(x)-\Psi_x(G(x)).
\]
At the dominant singularity \(r=\frac{1}{4m}\) we have \(G(r)=\frac12\), and the standard
square--root transfer plus singular--coefficient propagation gives
\[
\pbot(m)=1-\partial_v\Psi_r\!\left(\tfrac12\right).
\]
Using \(\partial_v\Psi_r(v)=\frac12\left(1+\frac{1-v}{\sqrt{(1-v)^2+\frac{1}{m}}}\right)\) gives
\[
\partial_v\Psi_r\!\left(\tfrac12\right)=\frac12\left(1+\frac{\tfrac12}{\sqrt{\tfrac14+\tfrac1m}}\right)
=\frac12\left(1+\sqrt{\frac{m}{m+4}}\right),
\]
which gives the claimed formula.
\end{proof}

\begin{theorem}[Macroscopic cut for nonvanishing limits]\label{thm:cut}
For each \(m\ge2\) and \(k\in\{0,1,\dots,m-1\}\), let
\[
q_k(m):=\lim_{n\to\infty}\frac{[x^n]H_k(x)}{[x^n]G(x)}
=\lim_{n\to\infty}\frac{\#\{\text{outputs }\ge v_k\}}{\#\{\text{all outputs}\}},
\]
so that \(q_{m-1}(m)=\ptop(m)\) and \(p_k(m)=q_k(m)-q_{k+1}(m)\).
Fix \(t\in[0,1]\) and set \(k=\lfloor t(m-1)\rfloor\).  Then, as \(m\to\infty\),
\[
\boxed{\;\;
q_{\lfloor t(m-1)\rfloor}(m)\longrightarrow \frac{1}{\sqrt{\,1+2t\,}}.
\;\;}
\]
Consequently, for any \emph{fixed} level index \(j\) (independent of \(m\)) one has
\(p_j(m)\to0\) as \(m\to\infty\); to obtain a nonzero limit one must place the cut at
a \emph{macroscopic rank} \(k\asymp m\) (equivalently, \(j/(m-1)\to t\)).
\end{theorem}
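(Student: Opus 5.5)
The plan is to express each cut proportion \(q_k(m)\) as a partial product of the same derivative factors that appear in Proposition~\ref{prop:p1m-product}. The large-\(m\) argument of Theorem~\ref{thm:p1limit} then applies, truncated at time \(t\) instead of \(1\).

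\textbf{Step 1 (exact formula for \(q_k(m)\)).} By Proposition~\ref{prop:godel-m-phi} and the inverse branch of Proposition~\ref{prop:godel-m-psi}, \(H_{k+1}=\Psi_x(H_k)\) with \(H_0=G\). Hence \(H_k(x)=\Psi_x^{\,k}(G(x))=w_k(x)\), the \(k\)-th analytic iterate already used in the proof of Proposition~\ref{prop:p1m-product}. That proof's singular-coefficient propagation gives
\[
H_k(x)=w_k-\kappa_k\sqrt{1-x/r}+O(1-x/r),\qquad \kappa_k=\tfrac12\prod_{i=0}^{k-1}\partial_v\Psi_r(w_i).
\]
The propagation was carried out for every \(k\le m-1\), not only the last one. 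Transfer and division by \(g_n\) then give existence of the limit and
\[
q_k(m)=\prod_{i=0}^{k-1}\partial_v\Psi_r(w_i),
\]
with \(q_0=1\) and \(q_{m-1}=\ptop(m)\). The identity \(p_k=q_k-q_{k+1}\) is immediate from \(G_k=H_k-H_{k+1}\).

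\textbf{Step 2 (logarithmic expansion).} From the proof of Theorem~\ref{thm:p1limit} we reuse three facts. First, \(u_i=1-w_i\in[\tfrac12,1]\). Second, \(\log\partial_v\Psi_r(w_i)=-\frac{1}{4mu_i^2}+O(m^{-2})\) uniformly in \(i\le m\). Third, the Euler estimate \(\max_{i\le m}|u_i-u(i/m)|=O(1/m)\) holds, where \(u(s)^2=\frac14+\frac s2\). Summing over \(i<k\) with \(k\le m\) bounds the error terms by \(O(1/m)\) uniformly in \(k\). This gives
\[
\log q_k(m)=-\frac1m\sum_{i=0}^{k-1}\frac{1}{4u(i/m)^2}+O(1/m).
\]

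\textbf{Step 3 (Riemann sum up to \(t\)).} Take \(k=\lfloor t(m-1)\rfloor\), so \(k/m\to t\). The integrand \(s\mapsto 1/(4u(s)^2)=1/(1+2s)\) is continuous and bounded on \([0,1]\). Therefore the Riemann sum over \(i<k\) converges to
\[
\int_0^t\frac{ds}{1+2s}=\tfrac12\log(1+2t),
\]
and so \(q_k(m)\to(1+2t)^{-1/2}\). The only new point relative to Theorem~\ref{thm:p1limit} is that the number of summands varies with \(m\). The difference between summing to \(k\) and summing to \(tm\) is at most \(O(1)\) terms, each of size \(O(1/m)\), so it is negligible. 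I expect this step, together with checking that all error bounds in Step 2 are uniform in \(k\), to be the only real point needing care. Since every estimate was already uniform over \(0\le i\le m\), it should go through directly.

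\textbf{Step 4 (consequence).} For fixed \(j\), Step 1 gives
\[
p_j(m)=q_j(m)\bigl(1-\partial_v\Psi_r(w_j)\bigr).
\]
Here \(q_j(m)\le1\), and \(1-\partial_v\Psi_r(w_j)=\frac1{4mu_j^2}+O(m^{-2})\le \frac1m+O(m^{-2})\). Hence \(p_j(m)=O(1/m)\to0\), which is consistent with Proposition~\ref{prop:p0m}. Nonzero limits of \(q_k\) can therefore only be seen at cuts \(k\asymp m\), as the first part of the theorem shows.
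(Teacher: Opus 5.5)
Your proposal is correct and follows the paper's route. It uses the same four ingredients in the same order:
\begin{itemize}
\item the partial-product formula $q_k(m)=\prod_{i<k}\partial_v\Psi_r(w_i)$, obtained by truncating the singular-coefficient propagation of Proposition~\ref{prop:p1m-product};
\item the uniform expansion $\log\partial_v\Psi_r(w_i)=-1/(4mu_i^2)+O(m^{-2})$;
\item the Euler estimate $|u_i-u(i/m)|=O(1/m)$;
\item a Riemann sum stopped at $t$.
\end{itemize}

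Your Step~4 goes slightly beyond the paper, which only asserts the consequence: you give the explicit bound $p_j(m)=q_j(m)\bigl(1-\partial_v\Psi_r(w_j)\bigr)\le \frac1m+O(m^{-2})$. One wording fix: your closing sentence should speak of \emph{nontrivial} rather than ``nonzero'' limits of $q_k$. For fixed $k$ one has $q_k(m)\to1$, which is already a nonzero limit.
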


\begin{proof}
Fix \(t\in[0,1]\) and set \(k=\lfloor t(m-1)\rfloor\). As before, let \(r=\frac{1}{4m}\),
\(w_{j+1}=\Psi_r(w_j)\) with \(w_0=\frac12\), and write \(u_j:=1-w_j\). Then
\[
u_{j+1}=\frac{u_j+\sqrt{u_j^2+\frac1m}}{2},
\qquad
\partial_v\Psi_r(w_j)=\frac12\left(1+\frac{u_j}{\sqrt{u_j^2+\frac1m}}\right).
\]
Since \(u_{j+1}-u_j=\bigl(\sqrt{u_j^2+\frac1m}-u_j\bigr)/2\ge0\), the sequence \((u_j)\) is
increasing, with \(u_0=\frac12\). Moreover, if \(u_j\le 1\) then
\[
u_{j+1}\le \frac{u_j+\sqrt{u_j^2+1}}{2}\le \frac{u_j+(u_j+1)}{2}\le 1,
\]
hence
\[
\frac12\le u_j\le 1\qquad (0\le j\le m).
\]
By the same singular-coefficient propagation as in Proposition~\ref{prop:p1m-product},
applied to the truncated \(\Psi\)-chain defining \(H_k\), one has
\[
q_k(m)=\prod_{j=0}^{k-1}\partial_v\Psi_r(w_j).
\]

\medskip
For \(u\in[1/2,1]\) the uniform expansion \eqref{eq:derivexp_rewrite} gives
\[
\log\bigl(\partial_v\Psi_r(w_j)\bigr)
=-\frac{1}{4m u_j^2}+O\!\left(\frac{1}{m^2}\right),
\]
uniformly for \(0\le j\le m\). Summing from \(j=0\) to \(k-1\) yields
\begin{equation}
\label{eq:logqk_rewrite}
\log q_k(m)
=-\frac{1}{4m}\sum_{j=0}^{k-1}\frac{1}{u_j^2}+o(1),
\end{equation}
since \(k\le m\) implies \(\sum_{j<k}O(m^{-2})=O(k/m^2)=O(1/m)=o(1)\).

\medskip
Let \(h:=1/m\) and \(t_j:=jh\). As above, \((u_j)\) is the Euler scheme for
\(u'(s)=\frac{1}{4u(s)}\), \(u(0)=\frac12\), and the Euler estimate gives
\[
\max_{0\le j\le k}|u_j-u(t_j)|=O(1/m),
\]
where \(u(s)^2=\frac14+\frac{s}{2}\) on \([0,1]\).
Consequently,
\[
\frac{1}{m}\sum_{j=0}^{k-1}\frac{1}{u_j^2}
=\frac{1}{m}\sum_{j=0}^{k-1}\frac{1}{u(t_j)^2}+o(1)
\longrightarrow \int_0^{t}\frac{ds}{u(s)^2},
\]
since \(k/m\to t\). Combining this with \eqref{eq:logqk_rewrite} yields
\[
\log q_k(m)\longrightarrow -\int_0^{t}\frac{ds}{4u(s)^2}
=-\int_0^{t}\frac{ds}{1+2s}
=-\frac12\log(1+2t).
\]
Therefore \(q_{\lfloor t(m-1)\rfloor}(m)\to (1+2t)^{-1/2}\), as claimed.
\end{proof}

\textbf{Remark (all levels \(p_j(m)\)).}
The same singular-coefficient propagation computes \emph{all} convergence constants
\(p_j(m)=\lim_{n\to\infty} g_n^{(j)}/g_n\).  If \(H_k=\sum_{j=k}^{m-1}G_j\) are the tail sums,
then at the critical point \(r=1/(4m)\) one has \(H_k=\Phi_r(H_{k+1})\) with
\(\Phi_x(u)=u+\frac{x}{1-u}\).  Writing \(q_k(m):=\lim_{n\to\infty} [x^n]H_k(x)/[x^n]G(x)\),
one gets the recursion
\[
q_{m-1}(m)=\ptop(m),\qquad
q_k(m)=\big(\partial_u\Phi_r(H_{k+1}(r))\big)\,q_{k+1}(m),
\quad
p_k(m)=q_k(m)-q_{k+1}(m),
\]
where \(\partial_u\Phi_r(u)=1+\frac{r}{(1-u)^2}\).

\paragraph{How the mass spreads across truth values as \(m\) increases.}
For fixed \(m\ge2\) and each \(j\in\{0,1,\dots,m-1\}\), define the \emph{convergence constant}
(limit proportion)
\[
p_j(m)\;:=\;\lim_{n\to\infty}\frac{g_n^{(j)}}{g_n},
\qquad
g_n=m^nC_{n-1}.
\]
Thus \(p_j(m)\) is the asymptotic fraction of outputs equal to \(v_j\) as \(n\to\infty\).
In particular, \(\ptop(m)\) is the limiting proportion of \(1=v_{m-1}\).
These constants satisfy \(\sum_{j=0}^{m-1}p_j(m)=1\), and they determine the asymptotic
coefficients via
\[
g_n^{(j)}\sim \frac{p_j(m)}{4}\cdot \frac{(4m)^n}{\sqrt{\pi}\,n^{3/2}}
\qquad (n\to\infty).
\]

Tables~\ref{tab:pvector-m5} and \ref{tab:pvector-m10} list the full limiting vectors
\((\pbot(m),p_1(m),\dots,p_{m-2}(m), \ptop(m))\) for \(m=5\) and \(m=10\), illustrating how the mass
is distributed among intermediate truth values as \(m\) grows.

\begin{table}[H]
\centering
\small
\renewcommand{\arraystretch}{1.05}
\begin{tabular}{c|ccccc}
\hline
\(j\) (value \(v_j\)) & 0 & 1 & 2 & 3 & 4 \\
\hline
\(p_j(5)\) & 0.127322 & 0.089600 & 0.067242 & 0.052766 & 0.663070 \\
\hline
\end{tabular}
\caption{Limit proportions \(p_j(5)=\lim_{n\to\infty} g_n^{(j)}/g_n\) for Gödel--\(5\).  The
top entry \(p_4(5)\) is the limiting proportion of \(1=v_4\).}
\label{tab:pvector-m5}
\end{table}

\begin{table}[H]
\centering
\tiny
\renewcommand{\arraystretch}{1.05}
\begin{tabular}{c|cccccccccc}
\hline
\(j\) (value \(v_j\)) & 0 & 1 & 2 & 3 & 4 & 5 & 6 & 7 & 8 & 9 \\
\hline
\(p_j(10)\) &
0.077423 & 0.062152 & 0.051275 & 0.043214 & 0.037050 &
0.032213 & 0.028338 & 0.025177 & 0.022560 & 0.620598 \\
\hline
\end{tabular}
\caption{Limit proportions \(p_j(10)=\lim_{n\to\infty} g_n^{(j)}/g_n\) for Gödel--\(10\).  The
top entry \(p_9(10)\) is the limiting proportion of \(1=v_9\).}
\label{tab:pvector-m10}
\end{table}

\newpage

\oneLinePart{ Limit Law as $m\to\infty$}\label{sec:limit-law-shape}

For each $m\ge2$, let
\[
p_k(m):=\lim_{n\to\infty}\frac{g_n^{(k)}}{g_n}\qquad (k=0,1,\dots,m-1)
\]
denote the limiting proportion of output entries equal to $v_k$ in the Gödel--$m$ truth tables.
Equivalently, $(p_k(m))_{k=0}^{m-1}$ is a probability distribution on $\{0,1,\dots,m-1\}$.
Define the tail (survival) sums
\[
q_k(m):=\sum_{j=k}^{m-1}p_j(m)=\Pr(K_m\ge k),
\]
where $K_m$ is a random variable with $\Pr(K_m=k)=p_k(m)$.

\subsection{Macroscopic cuts}

\begin{theorem}[Macroscopic cut for nonvanishing limits]\label{thm:macro-cut}
For each $m\ge 2$ and $k\in\{0,1,\dots,m-1\}$, one has
\[
q_k(m)=\lim_{n\to\infty}\frac{[x^n]H_k(x)}{[x^n]G(x)}
=\lim_{n\to\infty}\frac{\#\{\text{outputs}\ge v_k\}}{\#\{\text{all outputs}\}},
\]
so that $q_{m-1}(m)=p_{m-1}(m)$ and $p_k(m)=q_k(m)-q_{k+1}(m)$.
Fix $t\in[0,1]$ and set $k=\lfloor t(m-1)\rfloor$. Then, as $m\to\infty$,
\[
q_{\lfloor t(m-1)\rfloor}(m)\longrightarrow \frac{1}{\sqrt{1+2t}}.
\]
Consequently, for any fixed level index $j$ (independent of $m$) one has $p_j(m)\to 0$ as
$m\to\infty$; to obtain a nonzero limit one must place the cut at a macroscopic rank $k\simeq m$
(equivalently, $k/(m-1)\to t$).

\end{theorem}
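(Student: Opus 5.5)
The statement repeats Theorem~\ref{thm:cut}, so I will follow the route of that proof, now phrased in terms of the survival function of $K_m$. There are four steps: the identification of $q_k(m)$ as a finite product, a uniform expansion of its log-factors, an Euler comparison with an ODE, and a Riemann-sum limit.

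\emph{Step 1: product formula.} By Proposition~\ref{prop:godel-m-psi}, $H_k(x)=\Psi_x^{\,k}(G(x))$. I will run the singular-coefficient propagation from the proof of Proposition~\ref{prop:p1m-product}, truncated after $k$ steps. Writing $s=\sqrt{1-x/r}$, one gets $H_k(x)=w_k-\kappa_k s+O(s^2)$ with $\kappa_k=\tfrac12\prod_{j<k}\partial_v\Psi_r(w_j)$. The transfer theorem then gives the limit of $[x^n]H_k/[x^n]G$, and it equals $q_k(m)=\prod_{j=0}^{k-1}\partial_v\Psi_r(w_j)$. This limit also coincides with $\sum_{j\ge k}p_j(m)$, because $H_k=\sum_{j\ge k}G_j$ and each $G_j=H_j-H_{j+1}$ has its own limiting ratio. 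Hence $q_{m-1}(m)=p_{m-1}(m)$ and $p_k(m)=q_k(m)-q_{k+1}(m)$.

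\emph{Step 2: uniform expansion of the log-factors.} Put $u_j=1-w_j$, so that $u_{j+1}=\tfrac12\bigl(u_j+\sqrt{u_j^2+1/m}\bigr)$ and $u_j$ increases from $u_0=\tfrac12$ while staying in $[\tfrac12,1]$ (as shown in Theorem~\ref{thm:p1limit}). By \eqref{eq:derivexp_rewrite}, $\log\partial_v\Psi_r(w_j)=-\tfrac{1}{4mu_j^2}+O(m^{-2})$ uniformly in $j\le m$. Summing over $j<k$ with $k\le m$ gives
\[
\log q_k(m)=-\frac{1}{4m}\sum_{j<k}u_j^{-2}+O\!\left(\frac1m\right).
\]

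\emph{Step 3: Euler comparison.} The recursion is an Euler scheme with step $h=1/m$ for the ODE $u'=1/(4u)$, $u(0)=\tfrac12$, whose solution is $u(s)^2=\tfrac14+\tfrac s2$. The Lipschitz estimate from the proof of Theorem~\ref{thm:p1limit} gives $\max_{j\le m}|u_j-u(j/m)|=O(1/m)$.

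\emph{Step 4: Riemann sum.} With $k=\lfloor t(m-1)\rfloor$ we have $k/m\to t$. Since $u^{-2}$ is continuous and bounded on $[0,1]$, Step~3 turns $\tfrac1m\sum_{j<k}u_j^{-2}$ into a Riemann sum converging to $\int_0^t u(s)^{-2}\,ds$. Therefore
\[
\log q_k(m)\to-\int_0^t\frac{ds}{1+2s}=-\tfrac12\log(1+2t).
\]

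\emph{Consequence for fixed levels.} Fix $j$. Then $j/(m-1)\to0$, and the limit function $(1+2t)^{-1/2}$ is continuous at $0$ with value $1$. The uniform bound $\log q_k(m)=-\tfrac1{4m}\sum_{i<k}u_i^{-2}+O(1/m)$ with $u_i^{-2}\le4$ gives directly $|\log q_j(m)|,|\log q_{j+1}(m)|\le (j+1)/m+O(1/m)\to0$. Hence both tend to $1$, and $p_j(m)=q_j(m)-q_{j+1}(m)\to0$.

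\emph{Main obstacle.} Every error term must be uniform in $j\le k\le m$, and this rests on the a priori bound $u_j\in[\tfrac12,1]$. One delicate point is already settled: the product formula of Step~1 holds exactly for every fixed $m$, since it is a finite composition. The remaining care is that the $O(m^{-2})$ per-step errors and the $O(h^2)$ Euler errors each accumulate over at most $m$ steps to $O(1/m)$.
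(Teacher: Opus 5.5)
Your proposal is correct and follows the paper's proof essentially step for step: the product formula via truncated singular-coefficient propagation, the uniform expansion \eqref{eq:derivexp_rewrite}, the Euler comparison with $u'=1/(4u)$, and the Riemann-sum limit; your direct bound $|\log q_j(m)|\le (j+1)/m+O(1/m)$ also supplies the justification of $p_j(m)\to0$ for fixed $j$, which the paper leaves implicit. One caveat, inherited from the paper: the argument for $u_j\le 1$ in Theorem~\ref{thm:p1limit} that you cite is faulty, since it only yields $u_{j+1}\le u_j+\tfrac12$, but the bound follows at once from $u_{j+1}-u_j=\frac{1/m}{2(u_j+\sqrt{u_j^2+1/m})}\le\frac{1}{2m}$ (using $u_j\ge\tfrac12$), which gives $u_j\le\tfrac12+\tfrac{j}{2m}\le1$ for $j\le m$.
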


\begin{proof}
Fix $t\in[0,1]$ and set $k:=\lfloor t(m-1)\rfloor$.  Write $w_0=\frac12$ and define the
iterates at the critical point $r=\frac1{4m}$ by
\[
w_{j+1}:=\Psi_r(w_j)\qquad (j\ge0),
\]
and set
\[
u_j:=1-w_j.
\]
Let $h:=1/m$ and $t_j:=jh$.  As in the proof of Theorem~4, the recursion for $(u_j)$ is the
Euler scheme for the ODE
\[
u'(s)=\frac{1}{4u(s)},\qquad u(0)=\frac12,
\]
whose solution satisfies $u(s)^2=\frac14+\frac{s}{2}$ on $[0,1]$.  Moreover, the Euler error
bound gives
\[
\max_{0\le j\le m}\,|u_j-u(t_j)|=O(1/m),
\]
hence also
\[
\max_{0\le j\le m}\,|u_j^2-u(t_j)^2|=O(1/m).
\]
In particular, for $0\le j\le m$ one has
\[
u_j^2=\frac14+\frac{j}{2m}+O(1/m)
\qquad\text{uniformly in }j.
\]

For $u\in[1/2,1]$ we also have the uniform expansion
\[
\partial_v\Psi_r(1-u)=1-\frac{1}{4mu^2}+O\!\left(\frac1{m^2}\right),
\]
hence
\[
\partial_v\Psi_r(w_j)
=1-\frac{1}{4m\,u_j^2}+O\!\left(\frac1{m^2}\right)
\qquad\text{uniformly for }0\le j\le m.
\]
The same singular-coefficient propagation used for the top level yields the product rule
\[
q_k(m)=\prod_{j=0}^{k-1}\partial_v\Psi_r(w_j).
\]
Therefore, using $\log(1-y)=-y+O(y^2)$ uniformly, 
(using $u_j\in[1/2,1]$, so the argument of $\log$ is $1-O(1/m)$ uniformly),
\[
\log q_k(m)
=\sum_{j=0}^{k-1}\log\!\left(1-\frac{1}{4m\,u_j^2}+O\!\left(\frac1{m^2}\right)\right)
=-\frac{1}{4m}\sum_{j=0}^{k-1}\frac{1}{u_j^2}+o(1),
\]
since $k\le m$ implies $\sum_{j<k}O(1/m^2)=O(k/m^2)=O(1/m)=o(1)$.

Now $k=\lfloor t(m-1)\rfloor$ implies $k/m\to t$, and since $u_j=u(t_j)+O(1/m)$ uniformly,
\[
\frac1m\sum_{j=0}^{k-1}\frac{1}{u_j^2}
=\frac1m\sum_{j=0}^{k-1}\frac{1}{u(t_j)^2}+o(1)
\longrightarrow \int_0^t \frac{ds}{u(s)^2}.
\]
Using $u(s)^2=\frac14+\frac{s}{2}$ gives
\[
\log q_k(m)\longrightarrow
-\int_0^t \frac{ds}{4u(s)^2}
=-\int_0^t \frac{ds}{1+2s}
=-\frac12\log(1+2t),
\]
whence $q_k(m)\to (1+2t)^{-1/2}$.  Since $k=\lfloor t(m-1)\rfloor$, this is the claimed limit.
\end{proof}

\subsection{Limit law and limit shape}
Define the rescaled level
\[
T_m := \frac{K_m}{m-1}\in[0,1],
\qquad\text{so that}\qquad
\Pr\!\left(T_m=\frac{k}{m-1}\right)=\Pr(K_m=k)=p_k(m)
\quad (k=0,1,\dots,m-1).
\]
Equivalently, let
\[
\mu_m := \sum_{k=0}^{m-1} p_k(m)\,\delta_{k/(m-1)}
\]
be the law of $T_m$ as a probability measure on $[0,1]$.

\newpage

\begin{theorem}[Limit law / weak limit of the scaled output level]\label{thm:limit-law-shape}
Assume the macroscopic cut limit of Theorem~6 holds. For $t\in[0,1)$ and let
$k_m(t)=\lceil t(m-1)\rceil$. Then
\begin{equation}\label{eq:survival-Tm}
\Pr(T_m\ge t)=\sum_{j=k_m(t)}^{m-1} p_j(m)=q_{k_m(t)}(m)\longrightarrow \frac1{\sqrt{1+2t}}.
\end{equation}
Assume also that the top mass converges:
\begin{equation}\label{eq:top-mass}
\Pr(T_m=1)=p_{m-1}(m)\longrightarrow \frac1{\sqrt3}.
\end{equation}
Then $T_m\Rightarrow T$ (equivalently $\mu_m\Rightarrow\mu$), where
\begin{equation}\label{eq:limit-law}
\mu(dx)=\frac{1}{(1+2x)^{3/2}}\mathbf 1_{[0,1)}(x)\,dx+\frac1{\sqrt3}\,\delta_1(dx).
\end{equation}
Equivalently, $\Pr(T\ge t)=1/\sqrt{1+2t}$ for $0\le t<1$, and $\Pr(T=1)=1/\sqrt3$.
\end{theorem}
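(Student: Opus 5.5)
The plan is to prove weak convergence through survival functions. On $[0,1]$, $\mu_m\Rightarrow\mu$ is equivalent to $\Pr(T_m\ge t)\to\Pr(T\ge t)$ at every continuity point $t$ of the limiting survival function. One could equally use distribution functions at continuity points, since these differ only by complementation and the choice of closed or open tail.

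\textbf{Step 1: the survival identity.} First I check that $\Pr(T_m\ge t)=q_{k_m(t)}(m)$. We have $T_m=K_m/(m-1)\ge t$ if and only if $K_m\ge t(m-1)$. Since $K_m$ is an integer, this is equivalent to $K_m\ge\lceil t(m-1)\rceil=k_m(t)$. The tail sum $\sum_{j\ge k_m(t)}p_j(m)$ is $q_{k_m(t)}(m)$ by definition.

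\textbf{Step 2: moving from floor to ceiling.} Theorem~6 is stated with $k=\lfloor t(m-1)\rfloor$, but here the index is the ceiling, and $\lceil t(m-1)\rceil\in\{\lfloor t(m-1)\rfloor,\lfloor t(m-1)\rfloor+1\}$. I would handle this in one of two ways.
\begin{itemize}
\item Reuse the proof of Theorem~\ref{thm:macro-cut}. That argument only used $k\le m$ and $k/m\to t$, and both hold for the ceiling index.
\item Alternatively, use $q_{k+1}=q_k\,\partial_v\Psi_r(w_k)$ with $\partial_v\Psi_r(w_k)=1-O(1/m)$, which shows that shifting $k$ by one changes $q_k$ by a factor $1+O(1/m)$.
\end{itemize}
Either way, \eqref{eq:survival-Tm} holds for every $t\in[0,1)$. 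At $t=1$ we have $\Pr(T_m\ge1)=p_{m-1}(m)$, and \eqref{eq:top-mass} gives the limit $1/\sqrt3$ (Theorem~\ref{thm:p1limit}).

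\textbf{Step 3: identifying the limit law.} Define $S(t)=(1+2t)^{-1/2}$ for $t\in[0,1)$ and $S(1)=1/\sqrt3$. Because $\lim_{t\uparrow1}S(t)=1/\sqrt3$, $S$ is continuous and nonincreasing on $[0,1]$, with $S(0)=1$. So $S$ is the survival function of a probability measure on $[0,1]$. The absolutely continuous part has density $-S'(x)=(1+2x)^{-3/2}$ on $[0,1)$, and its mass is $1-1/\sqrt3$. The remaining mass $1/\sqrt3$ must then sit at $1$, which is exactly \eqref{eq:limit-law}.

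\textbf{Step 4: concluding weak convergence.} For $t\in(0,1)$ the distribution function of $T_m$ at $t$ is $1-\Pr(T_m\ge t')$ in a limit from the right. To avoid the closed/open tail subtlety, I use the monotone sandwich $\Pr(T_m\ge t_2)\le\Pr(T_m>t)\le\Pr(T_m\ge t)$ for $t<t_2$, together with continuity of $S$ on $[0,1)$. This yields $\Pr(T_m\le t)\to1-S(t)$ for every $t\in[0,1)$. At $t=1$ both distribution functions equal $1$. Since all measures live on $[0,1]$ and the distribution functions converge at all points of $[0,1)$, which includes every continuity point of $\mu$, we get $\mu_m\Rightarrow\mu$.

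The main obstacle is Step 2, the floor/ceiling index shift. The first option needs the uniform error bounds in Theorem~\ref{thm:macro-cut}, and these must hold along any index sequence with $k/m\to t$, not only for $k=\lfloor t(m-1)\rfloor$. Everything after Step 2 is soft measure theory.
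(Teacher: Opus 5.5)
Your proposal is correct and follows the paper's route. Both arguments turn the survival limits into convergence of distribution functions on $[0,1)$, then obtain the density $(1+2x)^{-3/2}$ by differentiation and the atom $1/\sqrt3$ at $1$ from the mass deficit $1-\int_0^1(1+2x)^{-3/2}\,dx$.

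Your version is more careful than the paper's, which passes silently from $\Pr(T_m\ge t)$ to $1-\Pr(T_m>t)$, and from the floor index of Theorem~\ref{thm:macro-cut} to the ceiling index $k_m(t)$. Your monotone sandwich fixes the first point. The same sandwich also settles Step~2 without re-entering the proof of Theorem~\ref{thm:macro-cut}: use that $q_k(m)$ is nonincreasing in $k$ and that $\lfloor t'(m-1)\rfloor\ge k_m(t)$ for $t'>t$ and large $m$.
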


\begin{proof}
Fix $t\in[0,1)$. By~\eqref{eq:survival-Tm} we have
\[
\Pr(T_m\ge t)\longrightarrow (1+2t)^{-1/2}.
\]
Hence the distribution functions
\[
F_m(t):=\Pr(T_m\le t)=1-\Pr(T_m>t)
\]
satisfy $F_m(t)\to F(t)$ for every $t\in[0,1)$, where
\[
F(t):=1-\frac{1}{\sqrt{1+2t}}.
\]
Since $F$ is continuous on $[0,1)$, this determines the weak limit on $[0,1)$.

Assume in addition that the top mass converges, i.e.~\eqref{eq:top-mass} holds:
\[
\Pr(T_m=1)=p_{m-1}(m)\longrightarrow \frac{1}{\sqrt3}.
\]
Thus the limit law has an atom of size $1/\sqrt3$ at $t=1$.

To identify the absolutely continuous part on $[0,1)$, note that $F$ is differentiable on $[0,1)$ and
\[
f(t):=F'(t)=\frac{1}{(1+2t)^{3/2}}.
\]
Moreover,
\[
\int_0^1 f(t)\,dt=\int_0^1 \frac{dt}{(1+2t)^{3/2}}
=\Bigl[-(1+2t)^{-1/2}\Bigr]_{0}^{1}
=1-\frac{1}{\sqrt3},
\]
so the remaining mass is $1/\sqrt3$ and sits at $t=1$. Therefore $T_m\Rightarrow T$
(equivalently $\mu_m\Rightarrow\mu$), where
\[
\mu(dt)=\frac{1}{(1+2t)^{3/2}}\mathbf 1_{[0,1)}(t)\,dt+\frac{1}{\sqrt3}\,\delta_1(dt).
\]
\end{proof}

\begin{corollary}[A derived statistic]
With $T$ as in Theorem~7, one has $\mathbb{E}[T]=\sqrt{3}-1$.
\end{corollary}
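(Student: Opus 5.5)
We compute the expectation from the explicit limit law \eqref{eq:limit-law} of Theorem~\ref{thm:limit-law-shape}. Since $T$ takes values in $[0,1]$, the cleanest route is the tail-integral formula
\[
\mathbb{E}[T]=\int_0^1 \Pr(T\ge t)\,dt .
\]
This formula is valid for any $[0,1]$-valued random variable. Moreover, $\Pr(T\ge t)$ and $\Pr(T>t)$ differ only at countably many $t$, so the atom at $1$ causes no ambiguity. Theorem~\ref{thm:limit-law-shape} gives $\Pr(T\ge t)=(1+2t)^{-1/2}$ for $0\le t<1$. The single point $t=1$ has Lebesgue measure zero, so it does not affect the integral.

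The key computation is then
\[
\int_0^1 (1+2t)^{-1/2}\,dt=\Bigl[(1+2t)^{1/2}\Bigr]_0^1=\sqrt3-1 .
\]

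As a cross-check, I would also compute directly from the decomposition of $\mu$ into its density part and its atom:
\[
\mathbb{E}[T]=\int_0^1 \frac{t\,dt}{(1+2t)^{3/2}}+\frac{1}{\sqrt3}.
\]
The integral is handled by substituting $s=1+2t$, so that $t=(s-1)/2$ and $dt=ds/2$. It becomes
\[
\tfrac14\int_1^3 \bigl(s^{-1/2}-s^{-3/2}\bigr)\,ds=\tfrac14\bigl[2\sqrt s+2s^{-1/2}\bigr]_1^3=\tfrac14\Bigl(2\sqrt3+\tfrac{2}{\sqrt3}-4\Bigr)=\frac{2}{\sqrt3}-1 .
\]
Adding the atom's contribution $1/\sqrt3$ gives $\sqrt3-1$ again, so the two computations agree.

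There is no real obstacle here. The only point needing care is that $\mathbb{E}[T]$ refers to the limit variable $T$, not to $\lim_m \mathbb{E}[T_m]$. If the latter were intended, it would follow from the weak convergence $T_m\Rightarrow T$ together with boundedness: the function $x\mapsto x$ is bounded and continuous on $[0,1]$, so $\mathbb{E}[T_m]\to\mathbb{E}[T]$. I would add this as a one-line remark.
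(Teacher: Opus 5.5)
Your proof is correct. Your main route differs from the paper's, while your ``cross-check'' is exactly the paper's proof. The paper writes $\mathbb{E}[T]$ as $\int_0^1 t\,(1+2t)^{-3/2}\,dt$ plus the atom contribution $1/\sqrt3$, evaluates the integral by the substitution $u=1+2t$ to get $\tfrac{2}{\sqrt3}-1$, and adds.

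Your route through $\mathbb{E}[T]=\int_0^1\Pr(T\ge t)\,dt$ is shorter and uses less. It needs only the survival function $(1+2t)^{-1/2}$ on $[0,1)$ from Theorem~\ref{thm:limit-law-shape} and the fact that $T\le 1$. The atom at $1$ is already encoded in $\Pr(T\ge t)\to 1/\sqrt3$ as $t\uparrow 1$, so you never handle it separately and never need the density.

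The two computations are linked by an integration by parts. Write $S(t)=(1+2t)^{-1/2}$. Then $\int_0^1 t\,(-S'(t))\,dt=-S(1^-)+\int_0^1 S(t)\,dt$, and the boundary term $-1/\sqrt3$ cancels exactly the atom the paper adds back.

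The paper's version has the minor advantage of showing the continuous part and the atom's share of the mean separately. Your closing remark is correct and is not in the paper: $\mathbb{E}[T_m]\to\mathbb{E}[T]$ follows from weak convergence, because $x\mapsto x$ is bounded and continuous on $[0,1]$.
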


\begin{proof}
By Theorem~7 (limit law \eqref{eq:limit-law}), $T$ has density
\[
f(t)=\frac{1}{(1+2t)^{3/2}}\mathbf 1_{[0,1)}(t)
\]
and an atom of mass $1/\sqrt3$ at $t=1$. Hence
\[
\mathbb{E}[T]=\int_0^1 \frac{t}{(1+2t)^{3/2}}\,dt+\frac{1}{\sqrt3}.
\]
With the substitution $u=1+2t$ (so $t=(u-1)/2$ and $dt=du/2$), we get
\[
\int_0^1 \frac{t}{(1+2t)^{3/2}}\,dt
=\frac14\int_1^3 (u^{-1/2}-u^{-3/2})\,du
=\frac12\Bigl(\sqrt{u}+u^{-1/2}\Bigr)\Big|_{1}^{3}
=\frac{2}{\sqrt3}-1.
\]
Therefore
\[
\mathbb{E}[T]=\left(\frac{2}{\sqrt3}-1\right)+\frac{1}{\sqrt3}
=\frac{3}{\sqrt3}-1
=\sqrt3-1.
\]
\end{proof}

\begin{remark}[Equivalent weak convergence formulation]
The convergence $T_m \Rightarrow T$ in Theorem~7 is equivalent to weak convergence of
measures $\mu_m \Rightarrow \mu$, i.e.
\[
\int_{[0,1]} \varphi\, d\mu_m
=\sum_{k=0}^{m-1} p_k(m)\,\varphi\!\left(\frac{k}{m-1}\right)
\longrightarrow
\int_{[0,1]} \varphi\, d\mu,
\]
for every bounded continuous function $\varphi:[0,1]\to\mathbb{R}$, where
\[
\mu(dt)=\frac{1}{(1+2t)^{3/2}}\mathbf 1_{[0,1)}(t)\,dt+\frac{1}{\sqrt3}\,\delta_1(dt).
\]
\end{remark}

\section{  $\;$ A black-box singularity propagation principle}\label{app:blackbox}

\begin{theorem}[Propagation of a square--root singularity under analytic iteration]\label{thm:blackbox-sqrt}
Let $r>0$. Suppose $H_0(x)$ is analytic in a $\Delta$--domain at~$r$ and admits the local expansion
\begin{equation}\label{eq:H0-sqrt}
H_0(x)=a_0-c_0\sqrt{1-\frac{x}{r}}+O\!\left(1-\frac{x}{r}\right)\qquad (x\to r),
\end{equation}
with $c_0\neq 0$.
Let $\Psi(x,v)$ be analytic in a neighbourhood of the compact set
\[
\mathcal{K}:=\{(r,a_k):0\le k\le K\}\subset\mathbb{C}^2,
\]
where $a_0$ is as in \eqref{eq:H0-sqrt} and the real numbers $a_k$ are defined recursively by
\begin{equation}\label{eq:ak-rec}
a_{k+1}=\Psi(r,a_k)\qquad (0\le k\le K-1).
\end{equation}
Assume further that
\begin{equation}\label{eq:dPsi-nonzero}
\partial_v\Psi(r,a_k)\neq 0\qquad (0\le k\le K-1).
\end{equation}
Define $H_k(x)$ recursively by
\begin{equation}\label{eq:Hk-rec}
H_{k+1}(x):=\Psi\!\bigl(x,H_k(x)\bigr)\qquad (0\le k\le K-1).
\end{equation}
Then for each $k\in\{0,1,\dots,K\}$, the function $H_k(x)$ is analytic in a $\Delta$--domain at~$r$
and has a square--root expansion
\begin{equation}\label{eq:Hk-sqrt}
H_k(x)=a_k-c_k\sqrt{1-\frac{x}{r}}+O\!\left(1-\frac{x}{r}\right)\qquad (x\to r),
\end{equation}
where the coefficients satisfy
\begin{equation}\label{eq:ck-rec}
c_{k+1}=c_k\,\partial_v\Psi(r,a_k)\qquad (0\le k\le K-1).
\end{equation}
In particular,
\begin{equation}\label{eq:ck-product}
c_k=c_0\prod_{j=0}^{k-1}\partial_v\Psi(r,a_j)\neq 0\qquad (1\le k\le K).
\end{equation}
Consequently, the coefficient asymptotics are universal:
\begin{equation}\label{eq:coeff-asympt}
[x^n]H_k(x)\sim \frac{c_k}{2\sqrt{\pi}}\;r^{-n}n^{-3/2}\qquad (n\to\infty).
\end{equation}
\end{theorem}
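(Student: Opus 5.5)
The plan is induction on $k$, carrying two properties at once: the analytic continuation of $H_k$ to a (possibly shrunken) $\Delta$--domain at $r$, and the square--root expansion \eqref{eq:Hk-sqrt}. The base case $k=0$ is the hypothesis \eqref{eq:H0-sqrt}. For the inductive step, assume $H_k$ is analytic in a $\Delta$--domain $\Delta_k$ at $r$, with $H_k(x)\to a_k$ as $x\to r$ inside $\Delta_k$. By hypothesis $\Psi$ is analytic on a polydisc $\{|x-r|<\rho,\ |v-a_k|<\rho\}$. By continuity of $H_k$ at $r$ within $\Delta_k$, there is $\eta>0$ such that $|H_k(x)-a_k|<\rho$ for $x\in\Delta_k$ with $|x-r|<\eta$, taking $\eta<\rho$.

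On that neighbourhood $x\mapsto \Psi(x,H_k(x))$ is a composition of analytic maps, hence analytic. Away from $r$, however, we only know $H_{k+1}$ as a power series with some radius of convergence. I would handle this in the standard way of \cite[Ch.~VI]{FlajoletSedgewick2009}. In the paper's application, $\Psi=\Psi_x$ is analytic wherever $(1-v)^2+4x$ avoids the branch cut, and the $H_k$ are the tail sums with nonnegative coefficients, dominated by $G$. So I would add the (implicit) requirement that the composition is analytic on $\Delta_k\cap\{|x|<r+\epsilon\}$ outside the small disc around $r$. For a pure black-box statement, I would state explicitly that $H_{k+1}$ is assumed analytic in $|x|<r$ and continuable to the remaining part of the $\Delta$--domain. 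This "global" part of the $\Delta$--analyticity is where I expect the main difficulty. The honest fix is to read the theorem as a local statement: the conclusion holds in $\Delta_k\cap D(r,\eta)$, and global $\Delta$--analyticity is inherited whenever $\Psi(x,H_k(x))$ has no other singularities on $|x|\le r$. That condition I would check in the application via $|H_k(x)|\le H_k(|x|)\le H_k(r)=a_k<1$ and $(1-a_k)^2+4r>0$.

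The local expansion is a two-variable Taylor computation. Write $s=\sqrt{1-x/r}$, so $x-r=-rs^2$, and $H_k(x)-a_k=-c_ks+O(s^2)$. Expand
\[
\Psi(x,v)=\Psi(r,a_k)+\partial_v\Psi(r,a_k)(v-a_k)+\partial_x\Psi(r,a_k)(x-r)+O\bigl(|v-a_k|^2+|x-r|\bigr),
\]
which is uniform on the polydisc by Cauchy estimates. Substituting gives
\[
H_{k+1}(x)=a_{k+1}-c_k\,\partial_v\Psi(r,a_k)\,s+O(s^2),
\]
because the $x$--term is $O(s^2)$ and the quadratic term is $O(s^2)$. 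This yields \eqref{eq:Hk-sqrt} for $k+1$ with the recursion \eqref{eq:ck-rec}, valid as $x\to r$ in the $\Delta$--domain and not merely along the real axis. Iterating gives \eqref{eq:ck-product}, and nonvanishing follows from $c_0\ne0$ together with \eqref{eq:dPsi-nonzero}.

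Finally, \eqref{eq:coeff-asympt} follows from the transfer theorem. The constant $a_k$ contributes nothing for $n\ge1$. The term $O(1-x/r)$ transfers to $O(r^{-n}n^{-2})$. The coefficients satisfy $[x^n]\bigl(-\sqrt{1-x/r}\bigr)\sim r^{-n}n^{-3/2}/(2\sqrt\pi)$, so multiplying by $c_k$ gives the claimed asymptotic.
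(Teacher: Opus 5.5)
Your local computation is the paper's proof. You use the same induction on $k$ and the same first-order Taylor expansion of $\Psi$ at $(r,a_k)$, with $x-r=-rs^2$ and $(H_k-a_k)^2$ absorbed into $O(s^2)$. This gives the same recursion $c_{k+1}=c_k\,\partial_v\Psi(r,a_k)$, followed by the same appeal to the transfer theorem.

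You differ from the paper only on the global $\Delta$-analyticity of $H_{k+1}$, and there your caution is justified. The paper asserts that it follows from analyticity of $\Psi$ near $\mathcal K$ and $\Delta$-analyticity of $H_k$, ``possibly after shrinking the $\Delta$-domain''. That step fails for two reasons:
\begin{itemize}
\item Every $\Delta$-domain at $r$ contains the disc $|x|<r$, so shrinking cannot help.
\item A hypothesis at the finitely many points $(r,a_k)$ says nothing about $\Psi$ along the rest of the graph of $H_k$.
\end{itemize}
For instance, take $H_0(x)=\frac12-\frac12\sqrt{1-x/r}$ and $\Psi(x,v)=v+\frac{1}{2x-r}$. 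These satisfy every hypothesis with $K=1$ (here $\partial_v\Psi\equiv1$), and $H_1$ has the predicted local expansion at $r$. Yet $H_1=H_0+\frac{1}{2x-r}$ has a pole at $r/2$, is analytic in no $\Delta$-domain at $r$, and $[x^n]H_1$ grows like $(2/r)^n$. So the statement really does need the extra hypothesis you propose: analyticity of $x\mapsto\Psi(x,H_k(x))$ on the rest of the $\Delta$-domain. Without it only the local expansion survives, not the coefficient asymptotics.

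One caution about how you would check that hypothesis in the application. The inequality $(1-a_k)^2+4r>0$ only tests the real point $x=r$. What must be excluded is a zero of $D(x)=(1-H_k(x))^2+4x$ at complex $x$ with $|x|\le r$. The crude bound $|D(x)|\ge(1-a_k)^2-4r\ge\frac14-\frac1m$ is inconclusive for $m\le4$.

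A clean argument uses the tail sum $H_{k+1}$ itself. From $H_k=\Phi_x(H_{k+1})$ one gets $(2H_{k+1}-1-H_k)^2=D$, so $D(x)=0$ forces $(1-H_{k+1}(x))^2=-x$. This is impossible for $|x|\le r$. Nonnegative coefficients dominated by those of $G$ give $|1-H_{k+1}(x)|^2\ge(1-G(r))^2=\frac14$, whereas $|x|\le r=\frac1{4m}<\frac14$. With $D\neq0$ on the closed disc, $H_{k+1}=\frac12\bigl(1+H_k-\sqrt{D}\bigr)$ continues across $|x|=r$ away from $r$. Your local analysis then handles the neighbourhood of $r$.
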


\begin{proof}
Write $t:=1-x/r$, so $t\to 0$ as $x\to r$ (in the $\Delta$--domain). Then \eqref{eq:H0-sqrt} reads
\[
H_0(x)=a_0-c_0\sqrt{t}+O(t),
\]
where $\sqrt{t}=\sqrt{1-x/r}$ denotes the branch analytic in the $\Delta$--domain at $r$.
Fix $k\in\{0,\dots,K-1\}$ and assume inductively that $H_k$ is $\Delta$--analytic at $r$ and has
\eqref{eq:Hk-sqrt} with some $a_k$ and $c_k\neq 0$. We show the same for $H_{k+1}$.

Since $\Psi$ is analytic near $(r,a_k)$, we have the first-order expansion
\[
\Psi(x,v)=\Psi(r,a_k)+\partial_x\Psi(r,a_k)(x-r)+\partial_v\Psi(r,a_k)(v-a_k)+R(x,v),
\]
where
\[
R(x,v)=O\!\left((x-r)^2+(v-a_k)^2+|x-r|\,|v-a_k|\right)
\]
uniformly for $(x,v)$ near $(r,a_k)$.
Substitute $v=H_k(x)$ and use $x-r=-rt$ and $H_k(x)-a_k=-c_k\sqrt{t}+O(t)$ to obtain
\[
H_{k+1}(x)=\underbrace{\Psi(r,a_k)}_{=:a_{k+1}}
-\underbrace{c_k\,\partial_v\Psi(r,a_k)}_{=:c_{k+1}}\,\sqrt{t}+O(t),
\]
which is \eqref{eq:Hk-sqrt} for $k+1$ with $a_{k+1}$ as in \eqref{eq:ak-rec} and $c_{k+1}$ as in
\eqref{eq:ck-rec}. Condition \eqref{eq:dPsi-nonzero} implies $c_{k+1}\neq 0$, so the square--root term
persists. The $\Delta$--analyticity of $H_{k+1}(x)=\Psi(x,H_k(x))$ follows from analyticity of $\Psi$
near $\mathcal{K}$ and the $\Delta$--analyticity of $H_k$ at $r$ (possibly after shrinking the
$\Delta$--domain).

Iterating \eqref{eq:ck-rec} yields \eqref{eq:ck-product}. Finally, the transfer theorem for square--root
singularities applied to \eqref{eq:Hk-sqrt} gives \eqref{eq:coeff-asympt}.
\end{proof}

\oneLinePart{The Sequence Factory}

One advantage of the generating function approach is that it naturally produces, from the same underlying logical model, a large family of structured counting sequences, (we have shown this in Kleene case too, see ~\cite{Yildiz2020}).  Beyond the base output counts (how often each truth value occurs), we can \emph{refine} the enumeration by recording additional local information at the root split of a bracketing, namely the ordered pair of truth values carried by the left and right subformulae.  This refinement decomposes the global truth--table counts into many ``pair--count'' sequences whose ordinary generating functions are simple products of the level OGFs.  In this sense, Gödel--$m$ implication acts as a \emph{sequence factory}: a uniform mechanism that generates an entire collection of natural integer sequences, related by clear algebraic identities and by universal square--root asymptotics.

\begin{table}[h]
\centering
\scriptsize
\setlength{\tabcolsep}{3pt}
\renewcommand{\arraystretch}{1.05}
\caption{Gödel--4 sequences (first ten terms). Base sequences and refined pair--sequences.}
\label{tab:godel4-all}
\resizebox{\textwidth}{!}{%
\begin{tabular}{l|rrrrrrrrrr}
\hline
 & 1 & 2 & 3 & 4 & 5 & 6 & 7 & 8 & 9 & 10\\
\hline
\multicolumn{11}{l}{\textbf{Base sequences}}\\
\hline
$t_n$ & 1 & 10 & 80 & 825 & 9355 & 113237 & 1431976 & 18696855 & 250122284 & 3410617188\\
$b_n$ & 1 & 1 & 11 & 101 & 1116 & 13186 & 164093 & 2116676 & 28052479 & 379673121\\
$a_n$ & 1 & 2 & 15 & 143 & 1559 & 18379 & 228175 & 2938786 & 38902987 & 526061137\\
$f_n$ & 1 & 3 & 22 & 211 & 2306 & 27230 & 338444 & 4362627 & 57788170 & 781825066\\
$g_n$ & 4 & 16 & 128 & 1280 & 14336 & 172032 & 2162688 & 28114944 & 374865920 & 5098176512\\
\hline
\multicolumn{11}{l}{\textbf{Refined false-row pair--sequences}}\\
\hline
$N^{a,0}_n$ & 0 & 1 & 5 & 43 & 443 & 5046 & 61209 & 775277 & 10134321 & 135696192\\
$N^{b,0}_n$ & 0 & 1 & 4 & 36 & 367 & 4178 & 50613 & 640554 & 8368030 & 111992206\\
$N^{1,0}_n$ & 0 & 1 & 13 & 132 & 1496 & 18006 & 226622 & 2946796 & 39285819 & 534136668\\
\hline
\multicolumn{11}{l}{\textbf{Refined $a$- and $b$-row pair--sequences}}\\
\hline
$N^{b,a}_n$ & 0 & 1 & 3 & 28 & 281 & 3185 & 38444 & 485351 & 6328829 & 84580645\\
$N^{1,a}_n$ & 0 & 1 & 12 & 115 & 1278 & 15194 & 189731 & 2453435 & 32574158 & 441480492\\
$N^{1,b}_n$ & 0 & 1 & 11 & 101 & 1116 & 13186 & 164093 & 2116676 & 28052479 & 379673121\\
\hline
\multicolumn{11}{l}{\textbf{Refined true-row pair--sequences ($u\le v$)}}\\
\hline
$N^{0,0}_n$ & 0 & 1 & 6 & 53 & 554 & 6362 & 77580 & 986253 & 12927170 & 173452334\\
$N^{0,a}_n$ & 0 & 1 & 5 & 43 & 443 & 5046 & 61209 & 775277 & 10134321 & 135696192\\
$N^{0,b}_n$ & 0 & 1 & 4 & 36 & 367 & 4178 & 50613 & 640554 & 8368030 & 111992206\\
$N^{0,1}_n$ & 0 & 1 & 13 & 132 & 1496 & 18006 & 226622 & 2946796 & 39285819 & 534136668\\
$N^{a,a}_n$ & 0 & 1 & 4 & 34 & 346 & 3915 & 47284 & 597085 & 7787516 & 104093243\\
$N^{a,b}_n$ & 0 & 1 & 3 & 28 & 281 & 3185 & 38444 & 485351 & 6328829 & 84580645\\
$N^{a,1}_n$ & 0 & 1 & 12 & 115 & 1278 & 15194 & 189731 & 2453435 & 32574158 & 441480492\\
$N^{b,b}_n$ & 0 & 1 & 2 & 23 & 224 & 2555 & 30826 & 389311 & 5077062 & 67857384\\
$N^{b,1}_n$ & 0 & 1 & 11 & 101 & 1116 & 13186 & 164093 & 2116676 & 28052479 & 379673121\\
$N^{1,1}_n$ & 0 & 1 & 20 & 260 & 3250 & 41610 & 545574 & 7306117 & 99586900 & 1377654903\\
\hline
\end{tabular}}
\vspace{2pt}
\par\footnotesize\noindent
Note that $N^{u,v}_1=0$ for all pairs $(u,v)$ since there is no binary split at size $n=1$.
\end{table}

\newpage

\section{  $\;$ Refined pair-count sequences in the G\"odel--4 case}\label{app:godel4-refined-sequences}

As in the Kleene case~\cite{Yildiz2020}, one may refine the basic output sequences by recording,
at the \emph{root split} $\varphi \Rightarrow_G \psi$, the ordered pair of truth values
$\bigl(\nu(\varphi),\nu(\psi)\bigr)$.
This refinement is particularly transparent in the G\"odel--$4$ case, where the truth values are
\[
0<a<b<1,
\]
and the G\"odel implication is
\[
u\Rightarrow_G w=
\begin{cases}
1, & u\le w,\\
w, & u>w.
\end{cases}
\]

Let $F(x),A(x),B(x),T(x)$ be the OGFs counting rows whose \emph{output} value is $0,a,b,1$, respectively,
so
\[
F(x)=\sum_{n\ge1} f_n x^n,\quad
A(x)=\sum_{n\ge1} a_n x^n,\quad
B(x)=\sum_{n\ge1} b_n x^n,\quad
T(x)=\sum_{n\ge1} t_n x^n,
\]
with $f_1=a_1=b_1=t_1=1$ (a single variable can take each truth value once).

\medskip
\noindent\textbf{Pair-count refinement.}
For $u,w\in\{0,a,b,1\}$ and $n\ge2$, define $N^{u,w}_n$ to be the total number of truth-table rows
\emph{across all fully bracketed implications on $n$ variables and all valuations} whose root split
$\varphi\Rightarrow_G\psi$ satisfies
\[
\nu(\varphi)=u,\qquad \nu(\psi)=w.
\]
(For convenience set $N^{u,w}_1:=0$.)  Let $U(x)\in\{F(x),A(x),B(x),T(x)\}$ be the OGF corresponding to $u$,
and similarly let $W(x)$ correspond to $w$. Then for $n\ge2$,
\[
N^{u,w}_n=\sum_{i=1}^{n-1} U_i\,W_{n-i},
\qquad\text{so}\qquad
N^{u,w}(x):=\sum_{n\ge2}N^{u,w}_n x^n = U(x)\,W(x).
\]
Thus the pair-count OGFs are \emph{simple products} of the four base OGFs.

\medskip
\noindent\textbf{Recovering the output OGFs by summing pair classes.}
Since the output of $u\Rightarrow_G w$ depends only on the comparison of $u$ and $w$, the pair-count OGFs
immediately decompose $F,A,B,T$.

\medskip
\noindent\emph{False output $0$.}
This occurs iff $w=0$ and $u>0$, i.e.\ $(u,w)\in\{(a,0),(b,0),(1,0)\}$. Hence, for $n\ge2$,
\[
f_n = N^{a,0}_n+N^{b,0}_n+N^{1,0}_n,
\]
and at the OGF level
\[
F(x)-x = A(x)F(x)+B(x)F(x)+T(x)F(x)=\bigl(A(x)+B(x)+T(x)\bigr)F(x).
\]

\medskip
\noindent\emph{Output $a$.}
This occurs iff $w=a$ and $u>a$, i.e.\ $(u,w)\in\{(b,a),(1,a)\}$. Thus
\[
A(x)-x = B(x)A(x)+T(x)A(x)=\bigl(B(x)+T(x)\bigr)A(x).
\]

\medskip
\noindent\emph{Output $b$.}
This occurs iff $w=b$ and $u>b$, i.e.\ only $(u,w)=(1,b)$. Thus
\[
B(x)-x = T(x)B(x).
\]

\medskip
\noindent\emph{True output $1$.}
This occurs iff $u\le w$. There are $10$ admissible pairs:
\[
(0,0),(0,a),(0,b),(0,1),\ (a,a),(a,b),(a,1),\ (b,b),(b,1),\ (1,1),
\]
hence
\[
T(x)-x=\!\!\sum_{(u,w):\,u\le w}\! U(x)W(x)
=F^2+FA+FB+FT+A^2+AB+AT+B^2+BT+T^2.
\]

\medskip
In summary, see ~\ref{tab:godel4-all}, the $16$ pair-count sequences $\{N_n^{u,w}\}$ (or $16$ pair-count OGFs $N^{u,w}(x)$) form a
finest root-split decomposition; the four output sequences $(f_n,a_n,b_n,t_n)$ are recovered by summing the
appropriate subclasses dictated by the G\"odel implication rule.

\section{ $\;$ Refined pair-count sequences in the G\"odel--$m$ case}\label{app:godelm-pair-sequences}

Let the truth values be totally ordered
\[
v_0< v_1<\cdots< v_{m-1}=1,
\]
and let $G_k(x)=\sum_{n\ge1} g^{(k)}_n x^n$ denote the OGF counting rows whose \emph{output} value is exactly $v_k$.
Let
\[
G(x)=\sum_{k=0}^{m-1} G_k(x)=\frac{1-\sqrt{1-4mx}}{2},
\]
so the dominant singularity is $r=\frac1{4m}$.

\begin{theorem}[Number of refined pair--sequences]\label{thm:pair-sequences}
For each ordered pair $(i,j)\in\{0,\dots,m-1\}^2$ and each $n\ge 1$, let $N_n^{i,j}$ be the total number of
truth--table rows \emph{across all fully bracketed implications on $n$ variables and all valuations}
for which the root split $\varphi \Rightarrow_G \psi$ satisfies
\[
\nu(\varphi)=v_i \qquad\text{and}\qquad \nu(\psi)=v_j.
\]
(Equivalently, set $N_1^{i,j}:=0$ and count root splits for $n\ge2$.)
Then:
\begin{enumerate}
\item[(i)] There are exactly $m^2$ refined pair--sequences $\{N_n^{i,j}\}_{n\ge 1}$.
\item[(ii)] Their OGFs satisfy, for all $(i,j)$,
\[
N^{i,j}(x):=\sum_{n\ge 1} N_n^{i,j}x^n \;=\; G_i(x)\,G_j(x),
\qquad\text{and in particular } N_1^{i,j}=0.
\]
\item[(iii)] Under G\"odel implication, $v_i \Rightarrow_G v_j = v_{m-1}=1$ if $i\le j$, and
$v_i \Rightarrow_G v_j = v_j$ if $i>j$. Hence $\frac{m(m+1)}{2}$ of the pair--sequences (those with $i\le j$)
contribute to the top value $v_{m-1}$, while for each $j\le m-2$ exactly $m-1-j$ pair--sequences
(those with fixed right index $j$ and $i>j$) contribute to the output value $v_j$.
\end{enumerate}
\end{theorem}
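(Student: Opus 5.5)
The plan is to argue directly from the Catalan decomposition, exactly as in the G\"odel--$4$ refinement of Section~\ref{app:godel4-refined-sequences}, and then read off the counting statements from the implication table.

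For (i), the map $(i,j)\mapsto\{N^{i,j}_n\}_{n\ge1}$ is indexed by $\{0,\dots,m-1\}^2$, which has $m^2$ elements. To make ``exactly'' meaningful, I would take the sequences as indexed objects. Table~\ref{tab:godel4-all} already shows that distinct pairs can produce equal sequences, for instance $N^{a,0}=N^{0,a}$ and $N^{1,b}_n=b_n$.

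For (ii), fix $n\ge2$. Every bracketing in $\mathcal B_n$ splits uniquely as $\varphi\Rightarrow_G\psi$, with $\varphi\in\mathcal B_i$ on $p_1,\dots,p_i$ and $\psi\in\mathcal B_{n-i}$ on $p_{i+1},\dots,p_n$. A valuation $\nu$ of $p_1,\dots,p_n$ corresponds bijectively to a pair (valuation of the first $i$ variables, valuation of the last $n-i$). Hence the rows with $\nu(\varphi)=v_i$ and $\nu(\psi)=v_j$ and split point $s$ are in bijection with pairs consisting of a row of some formula in $\mathcal B_s$ with output $v_i$ and a row of some formula in $\mathcal B_{n-s}$ with output $v_j$. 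Therefore
\[
N^{i,j}_n=\sum_{s=1}^{n-1}g^{(i)}_s g^{(j)}_{n-s}.
\]
Since $N^{i,j}_1=0$ and $G_i,G_j$ have zero constant term, so that $[x^1]G_iG_j=0$, this gives $N^{i,j}(x)=G_i(x)G_j(x)$ coefficientwise.

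For (iii), I would use the definition of $\Rightarrow_G$ on $V_m$. The pairs $(i,j)$ with $i\le j$ number $\sum_{j=0}^{m-1}(j+1)=m(m+1)/2$, and each gives output $v_{m-1}$. A pair with $i>j$ gives output $v_j\ne v_{m-1}$, since $j<i\le m-1$. So for a fixed $j\le m-2$, the contributing pairs are $i=j+1,\dots,m-1$, which is $m-1-j$ of them. As a consistency check, summing gives $\sum_{s}\sum_{p>j}g^{(p)}_sg^{(j)}_{n-s}$, which recovers \eqref{eq:godel-m-rec} and \eqref{eq:godel-m-Gj}. Likewise the total over all $(i,j)$ is $G^2=G-x$.

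No step is a real obstacle. The only care needed is the bookkeeping at $n=1$, together with the observation that the row/valuation bijection in (ii) respects distinctness of variables, which holds because the left and right parts use disjoint variable sets.
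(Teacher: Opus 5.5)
Your proposal is correct and follows the paper's proof essentially step for step: the unique root split gives the convolution $N^{i,j}_n=\sum_{s=1}^{n-1}g^{(i)}_s g^{(j)}_{n-s}$, hence $N^{i,j}=G_iG_j$, and (i) and (iii) are read off from the index set and the G\"odel rule. Your explicit row/valuation bijection and your remark that ``exactly $m^2$'' refers to indexed sequences are useful clarifications. There is one slip in your side check, which the proof does not use: since $g_1=m$, the correct identity is $\sum_{i,j}G_iG_j=G^2=G-mx$, not $G-x$ (also, $N^{1,b}_n=b_n$ holds only for $n\ge2$ and is not an instance of two distinct pairs coinciding).
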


\begin{proof}
For $n\ge2$, every bracketing of size $n$ splits uniquely at the root into a left subformula of size $t$
and a right subformula of size $n-t$, where $1\le t\le n-1$. Counting rows with $\nu(\varphi)=v_i$ and
$\nu(\psi)=v_j$ gives the convolution
\[
N_n^{i,j}=\sum_{t=1}^{n-1} g_t^{(i)}\,g_{n-t}^{(j)}.
\]
Summing over $n\ge1$ yields $N^{i,j}(x)=G_i(x)G_j(x)$. The count in (i) is immediate since there are $m$ choices
for $i$ and $m$ choices for $j$. Finally, (iii) is exactly the defining rule of G\"odel implication.
\end{proof}

\begin{remark}[Refined pairs versus derived output sequences]\label{rem:pairs-vs-outputs}
The pair--sequences $\{N_n^{i,j}\}$ form the finest root-split decomposition. All output-count sequences
$g_n^{(j)}$ are obtained by summing appropriate $N_n^{i,j}$ according to the G\"odel implication rule:
for $j\le m-2$,
\[
g_n^{(j)}=\sum_{i=j+1}^{m-1} N_n^{i,j},
\]
while the top level is
\[
g_n^{(m-1)}=\sum_{0\le i\le j\le m-1} N_n^{i,j}.
\]
Thus the output sequences are \emph{not} additional independent families beyond $\{N_n^{i,j}\}$.
\end{remark}

\begin{theorem}[Uniform square-root asymptotics for all pair--sequences]\label{thm:pair-uniform-asympt}
For each $k\in\{0,\dots,m-1\}$ there exist constants $a_k\in\mathbb{R}$ and $b_k\neq0$ such
that, as $x\to r^-$,
\[
G_k(x)=a_k-b_k\sqrt{1-\frac{x}{r}}+O\!\left(1-\frac{x}{r}\right).
\]
Consequently,
\[
[x^n]\,G_k(x)\sim \frac{b_k}{2\sqrt{\pi}}\; r^{-n}\,n^{-3/2}
=\frac{b_k}{2\sqrt{\pi}}\,(4m)^n\,n^{-3/2}.
\]
Moreover, for every ordered pair $(i,j)$ one has
\[
N^{i,j}(x)=A_{i,j}-B_{i,j}\sqrt{1-\frac{x}{r}}+O\!\left(1-\frac{x}{r}\right),
\qquad
B_{i,j}=a_i b_j+a_j b_i,
\]
and hence
\[
N^{i,j}_n \sim \frac{B_{i,j}}{2\sqrt{\pi}}\; r^{-n}\,n^{-3/2}
      \;=\; \frac{B_{i,j}}{2\sqrt{\pi}}\,(4m)^n\,n^{-3/2}.
\]
In particular, all $m^2$ refined pair--sequences share the same dominant singularity $r$,
the same exponential growth $(4m)^n$, and the same polynomial decay $n^{-3/2}$; only the
constants depend on $(i,j)$.
\end{theorem}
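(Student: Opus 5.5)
The plan is to derive everything from the tail-sum chain and the black-box propagation principle, Theorem~\ref{thm:blackbox-sqrt}. Take $\Psi(x,v):=\Psi_x(v)=\frac{1+v-\sqrt{(1-v)^2+4x}}{2}$, start from $H_0=G$, and set $K=m-1$. The hypothesis \eqref{eq:H0-sqrt} holds with $a_0=\tfrac12$ and $c_0=\tfrac12$, because $G(x)=\tfrac12-\tfrac12\sqrt{1-x/r}$ exactly and $G$ is $\Delta$-analytic at $r=1/(4m)$.

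\textbf{Verifying the hypotheses on $\Psi$.} As in the proof of Theorem~\ref{thm:cut}, $u_k=1-w_k$ is increasing with $u_k\in[\tfrac12,1]$. Hence $(1-w_k)^2+4r\ge \tfrac14>0$, so the square root in $\Psi$ is analytic near each $(r,w_k)$, and
\[
\partial_v\Psi_r(w_k)=\tfrac12\Bigl(1+u_k/\sqrt{u_k^2+1/m}\Bigr)\in(\tfrac12,1),
\]
which is nonzero. Theorem~\ref{thm:blackbox-sqrt} then shows that every tail sum $H_k$ has the expansion $H_k(x)=w_k-c_k\sqrt{1-x/r}+O(1-x/r)$ with $c_k=\tfrac12\prod_{j<k}\partial_v\Psi_r(w_j)>0$. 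The $\Psi$-chain produces exactly the $H_k$ of Proposition~\ref{prop:godel-m-psi}, since $H_{k+1}=\Psi_x(H_k)$ is the unique branch with zero constant term.

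\textbf{Levels.} Write $G_k=H_k-H_{k+1}$ for $k\le m-2$ and $G_{m-1}=H_{m-1}$. This gives the expansion for $G_k$ with
\[
a_k=w_k-w_{k+1},\qquad b_k=c_k-c_{k+1}=c_k\bigl(1-\partial_v\Psi_r(w_k)\bigr),
\]
and $b_k>0$ because the derivative lies strictly in $(\tfrac12,1)$. For the top level, $b_{m-1}=c_{m-1}>0$. I expect this nonvanishing to be the only genuinely delicate point: differences of singular expansions could cancel, and the bound $\partial_v\Psi_r<1$ (strict, since $1/m>0$) is exactly what prevents that. The coefficient asymptotic then follows from the transfer theorem \cite[Ch.~VI]{FlajoletSedgewick2009}, which gives $[x^n]\sqrt{1-x/r}\sim -r^{-n}n^{-3/2}/(2\sqrt\pi)$.

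\textbf{Pairs.} By Theorem~\ref{thm:pair-sequences}(ii), $N^{i,j}=G_iG_j$. Multiplying the two expansions gives
\[
G_iG_j=a_ia_j-(a_ib_j+a_jb_i)\sqrt{1-x/r}+O(1-x/r),
\]
since $b_ib_j(1-x/r)$ is absorbed into the error. A product of $\Delta$-analytic functions is $\Delta$-analytic, so transfer applies once more and gives the stated $N^{i,j}_n$ asymptotics with constant $B_{i,j}=a_ib_j+a_jb_i$. Finally, to ensure $B_{i,j}\ne0$, so that the asymptotic is genuinely of order $(4m)^nn^{-3/2}$, I will check that $a_k=G_k(r)>0$. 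This holds because $G_k$ has nonnegative coefficients with $g^{(k)}_1=1$, so $G_k(r)\ge r>0$. Together with $b_k>0$, this gives $B_{i,j}>0$.
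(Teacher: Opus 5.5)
Your proposal is correct and follows the paper's own route: start the $\Psi_x$-chain at $H_0=G$ with $a_0=c_0=\tfrac12$, and propagate the square-root expansion to every tail sum $H_k$ via Theorem~\ref{thm:blackbox-sqrt}. Then difference to get $G_k=H_k-H_{k+1}$, multiply to get $N^{i,j}=G_iG_j$, and apply transfer.

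You also supply two justifications the paper only asserts: $b_k=c_k\bigl(1-\partial_v\Psi_r(w_k)\bigr)>0$ from $\partial_v\Psi_r(w_k)\in(\tfrac12,1)$, and $B_{i,j}>0$ from $a_k=G_k(r)\ge r$. Note that only the lower bound $u_k\ge\tfrac12$ is needed for these. This is just as well, because the paper's inductive argument for $u_k\le 1$, which you cite, is not valid as written: the step $u_k+\tfrac12\le 1$ fails once $u_k>\tfrac12$. The bound itself still holds for $k\le m$, since $u_{k+1}^2\le u_k^2+\tfrac{1}{2m}$.
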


\begin{proof}
Let $H_\ell(x):=\sum_{k=\ell}^{m-1}G_k(x)$ be the tail sums. Then $H_0(x)=G(x)$ and
\[
H_{\ell+1}(x)=\Psi_x\bigl(H_\ell(x)\bigr)\qquad(\ell=0,1,\dots,m-2),
\]
where $\Psi_x$ is the inverse branch of $\Phi_x(u)=u+\frac{x}{1-u}$, namely
\[
\Psi_x(v)=\frac{1+v-\sqrt{(1-v)^2+4x}}{2}.
\]
Write $\Delta(x)=1-\frac{x}{r}$. Since
\[
H_0(x)=G(x)=\frac12-\frac12\sqrt{\Delta(x)},
\]
$H_0$ has a square-root expansion at $r$. The analytic-iteration theorem
(Theorem~\ref{thm:blackbox-sqrt}) applied to $H_{\ell+1}=\Psi_x(H_\ell)$ propagates the
square-root form to all $H_\ell$:
\[
H_\ell(x)=h_\ell-c_\ell\sqrt{\Delta(x)}+O(\Delta(x)),
\qquad c_\ell\neq0.
\]
Then $G_k(x)=H_k(x)-H_{k+1}(x)$ for $k\le m-2$ and $G_{m-1}(x)=H_{m-1}(x)$, so each $G_k$
inherits a square-root expansion
\[
G_k(x)=a_k-b_k\sqrt{\Delta(x)}+O(\Delta(x)),
\]
with $b_k\neq0$. Multiplying $G_i(x)G_j(x)$ gives the stated expansion for $N^{i,j}(x)$ with
$B_{i,j}=a_i b_j+a_j b_i$. The coefficient asymptotics follow from the standard transfer
theorem for square-root singularities~\cite[Theorem~VI.1]{FlajoletSedgewick2009}.
\end{proof}

\begin{proposition}[Symmetry and an upper bound on distinct products]\label{prop:pair-seq-symmetry}
Fix $m\ge2$ and define $N^{i,j}(x)=G_i(x)G_j(x)$ as above.
\begin{enumerate}
\item[(i)] (Swap symmetry) For all $i,j$ one has $N^{i,j}(x)=N^{j,i}(x)$.
\item[(ii)] (Upper bound) Among the $m^2$ ordered pairs $(i,j)$ there are at most
\[
\binom{m+1}{2}=\frac{m(m+1)}{2}
\]
distinct OGFs $N^{i,j}(x)$ (equivalently, at most that many distinct integer sequences $N^{i,j}_n$),
since swap symmetry identifies $(i,j)$ with $(j,i)$.
\end{enumerate}
\end{proposition}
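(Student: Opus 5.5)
Both parts follow from the product formula in Theorem~\ref{thm:pair-sequences}(ii), namely $N^{i,j}(x)=G_i(x)\,G_j(x)$ for every ordered pair $(i,j)$. The proof is a short algebraic argument plus an orbit count.

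\emph{Step 1 (swap symmetry).} The $G_k(x)$ are formal power series with integer coefficients, so they lie in the commutative ring $\mathbb{Z}[[x]]$. Hence
\[
N^{i,j}(x)=G_i(x)G_j(x)=G_j(x)G_i(x)=N^{j,i}(x).
\]
The same statement holds at the coefficient level. The convolution $N^{i,j}_n=\sum_{t=1}^{n-1}g^{(i)}_t g^{(j)}_{n-t}$ turns into $N^{j,i}_n$ under the reindexing $t\mapsto n-t$. This explains the coincidences in Table~\ref{tab:godel4-all}, for instance $N^{a,0}_n=N^{0,a}_n$. Part (i) is then proved.

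\emph{Step 2 (upper bound).} Consider the involution $\sigma(i,j)=(j,i)$ on $\{0,\dots,m-1\}^2$. By Step~1 the map $(i,j)\mapsto N^{i,j}$ is constant on $\sigma$-orbits, so the number of distinct OGFs is at most the number of orbits. The orbits are of two kinds:
\begin{itemize}
\item the $m$ fixed points $(i,i)$;
\item the $\binom{m}{2}$ two-element orbits $\{(i,j),(j,i)\}$ with $i<j$.
\end{itemize}
This gives $m+\binom m2=\binom{m+1}{2}$ orbits. Equivalently, each orbit has the unique representative with $i\le j$. Since OGFs and coefficient sequences determine each other, the same bound holds for the distinct integer sequences $N^{i,j}_n$.

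There is no real obstacle. The only point needing care is that the claim is an upper bound ("at most"), so I would not try to prove that the $\binom{m+1}{2}$ products are pairwise distinct. Further coincidences could occur in principle; Table~\ref{tab:godel4-all} shows one such, $N^{1,b}_n=b_n$.
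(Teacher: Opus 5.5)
Your proof is correct and follows the paper's argument: (i) is commutativity of multiplication in $\mathbb{Z}[[x]]$, and (ii) counts unordered pairs with repetition, $m+\binom{m}{2}=\binom{m+1}{2}$, which you phrase as orbits of the swap involution. One aside is off, though it does not affect the upper bound: $N^{1,b}_n=b_n$ holds only for $n\ge2$ (it comes from $B=x+TB$), and it equates a pair-sequence with a base output sequence rather than two distinct unordered-pair products --- in fact the ten listed $m=4$ products with $i\le j$ are pairwise distinct.
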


\begin{proof}
(i) is commutativity of multiplication. (ii) follows because unordered pairs with repetition are counted by
$\binom{m+1}{2}$.
\end{proof}

\begin{remark}[No cross--$m$ coincidences]\label{rem:no-cross-m}
Sequences coming from different values of $m$ cannot coincide: by Theorem~\ref{thm:pair-uniform-asympt},
every pair-sequence has exponential growth $(4m)^n$. Hence a sequence from Gödel--$m$ cannot equal a sequence
from Gödel--$m'$ if $m\neq m'$.
\end{remark}

\begin{remark}[A small ``sequence factory'']\label{rem:sequence-factory}
For each $m$ and each ordered pair $(v_i,v_j)$, the Gödel--$m$ implication scheme produces a natural
integer sequence $N_n^{i,j}$ whose OGF is the simple product $G_i(x)G_j(x)$ and whose coefficients satisfy
a universal square-root asymptotic law
\[
N^{i,j}_n \sim C_{i,j}\,(4m)^n\,n^{-3/2}.
\]
\end{remark}

\newpage
\appendix

\section{Supplementary material}

\begin{table}[h]
\centering
\[
\begin{array}{c|c|c}
p_1 & p_2 & p_1 \Rightarrow_G p_2\\
\hline
0 & 0 & 1\\
0 & a & 1\\
0 & b & 1\\
0 & 1 & 1\\
a & 0 & 0\\
a & a & 1\\
a & b & 1\\
a & 1 & 1\\
b & 0 & 0\\
b & a & a\\
b & b & 1\\
b & 1 & 1\\
1 & 0 & 0\\
1 & a & a\\
1 & b & b\\
1 & 1 & 1
\end{array}
\]
\caption{Explicit truth table for \(p_1 \Rightarrow_G p_2\) on \(V_4\).}
\label{tab:godel-n2}
\end{table}

From Table~8 we obtain the distribution of truth values in the output column of the truth table.
There are three occurrences of the value zero, two occurrences of the value \(a\), one occurrence
of the value \(b\), and ten occurrences of the value one. Consequently, the total number of
entries is sixteen.

\newpage
\begin{table}[ht]
\centering
\caption{Truth tables for the two bracketings on three variables in Gödel four-valued logic.}
\label{tab:godel4-n3-sidebyside}

\begin{minipage}{0.3\textwidth}
\centering
\resizebox{0.60\linewidth}{!}{%
\begin{tabular}{cccc}
\hline
$p_1$ & $p_2$ & $p_3$ & ($p_1\Rightarrow_G p_2) \Rightarrow_G p_3$\\
\hline
0 & 0 & 0 & 0 \\
0 & 0 & a & a \\
0 & 0 & b & b \\
0 & 0 & 1 & 1 \\
0 & a & 0 & 0 \\
0 & a & a & a \\
0 & a & b & b \\
0 & a & 1 & 1 \\
0 & b & 0 & 0 \\
0 & b & a & a \\
0 & b & b & b \\
0 & b & 1 & 1 \\
0 & 1 & 0 & 0 \\
0 & 1 & a & a \\
0 & 1 & b & b \\
0 & 1 & 1 & 1 \\
a & 0 & 0 & 1 \\
a & 0 & a & 1 \\
a & 0 & b & 1 \\
a & 0 & 1 & 1 \\
a & a & 0 & 0 \\
a & a & a & a \\
a & a & b & b \\
a & a & 1 & 1 \\
a & b & 0 & 0 \\
a & b & a & a \\
a & b & b & b \\
a & b & 1 & 1 \\
a & 1 & 0 & 0 \\
a & 1 & a & a \\
a & 1 & b & b \\
a & 1 & 1 & 1 \\
b & 0 & 0 & 1 \\
b & 0 & a & 1 \\
b & 0 & b & 1 \\
b & 0 & 1 & 1 \\
b & a & 0 & 0 \\
b & a & a & 1 \\
b & a & b & 1 \\
b & a & 1 & 1 \\
b & b & 0 & 0 \\
b & b & a & a \\
b & b & b & b \\
b & b & 1 & 1 \\
b & 1 & 0 & 0 \\
b & 1 & a & a \\
b & 1 & b & b \\
b & 1 & 1 & 1 \\
1 & 0 & 0 & 1 \\
1 & 0 & a & 1 \\
1 & 0 & b & 1 \\
1 & 0 & 1 & 1 \\
1 & a & 0 & 0 \\
1 & a & a & 1 \\
1 & a & b & 1 \\
1 & a & 1 & 1 \\
1 & b & 0 & 0 \\
1 & b & a & a \\
1 & b & b & 1 \\
1 & b & 1 & 1 \\
1 & 1 & 0 & 0 \\
1 & 1 & a & a \\
1 & 1 & b & b \\
1 & 1 & 1 & 1 \\
\hline
\end{tabular}%
}
\end{minipage}
\hfill
\begin{minipage}{0.3\textwidth}
\centering
\resizebox{0.60\linewidth}{!}{%
\begin{tabular}{cccc}
\hline
$p_1$ & $p_2$ & $p_3$ & $p_1\Rightarrow_G (p_2\Rightarrow_G p_3)$\\
\hline
0 & 0 & 0 & 1 \\
0 & 0 & a & 1 \\
0 & 0 & b & 1 \\
0 & 0 & 1 & 1 \\
0 & a & 0 & 1 \\
0 & a & a & 1 \\
0 & a & b & 1 \\
0 & a & 1 & 1 \\
0 & b & 0 & 1 \\
0 & b & a & 1 \\
0 & b & b & 1 \\
0 & b & 1 & 1 \\
0 & 1 & 0 & 1 \\
0 & 1 & a & 1 \\
0 & 1 & b & 1 \\
0 & 1 & 1 & 1 \\
a & 0 & 0 & 1 \\
a & 0 & a & 1 \\
a & 0 & b & 1 \\
a & 0 & 1 & 1 \\
a & a & 0 & 0 \\
a & a & a & 1 \\
a & a & b & 1 \\
a & a & 1 & 1 \\
a & b & 0 & 0 \\
a & b & a & 1 \\
a & b & b & 1 \\
a & b & 1 & 1 \\
a & 1 & 0 & 0 \\
a & 1 & a & 1 \\
a & 1 & b & 1 \\
a & 1 & 1 & 1 \\
b & 0 & 0 & 1 \\
b & 0 & a & 1 \\
b & 0 & b & 1 \\
b & 0 & 1 & 1 \\
b & a & 0 & 0 \\
b & a & a & 1 \\
b & a & b & 1 \\
b & a & 1 & 1 \\
b & b & 0 & 0 \\
b & b & a & a \\
b & b & b & 1 \\
b & b & 1 & 1 \\
b & 1 & 0 & 0 \\
b & 1 & a & a \\
b & 1 & b & 1 \\
b & 1 & 1 & 1 \\
1 & 0 & 0 & 1 \\
1 & 0 & a & 1 \\
1 & 0 & b & 1 \\
1 & 0 & 1 & 1 \\
1 & a & 0 & 0 \\
1 & a & a & 1 \\
1 & a & b & 1 \\
1 & a & 1 & 1 \\
1 & b & 0 & 0 \\
1 & b & a & a \\
1 & b & b & 1 \\
1 & b & 1 & 1 \\
1 & 1 & 0 & 0 \\
1 & 1 & a & a \\
1 & 1 & b & b \\
1 & 1 & 1 & 1 \\
\hline
\end{tabular}%
}
\end{minipage}

\end{table}

\newpage

$\;$\\

\vspace{8em}

\begin{verse}
Onlar ki toprakta karınca,\\
suda balık,\\
havada kuş kadar çokturlar;\\
Korkak, cesur, hakim ve çocukturlar.\\
Kahreden ve yaratan ki onlardır,\\
Destanımızda yalnız onların maceraları vardır.
\end{verse}

\begin{flushleft}
Nazım Hikmet.
\end{flushleft}

\end{document}